\documentclass[11pt,a4paper]{amsart}

\usepackage{latexsym}
\usepackage{verbatim}

\usepackage{amssymb,amsmath,amscd,graphicx,color,enumerate}

\usepackage{hyperref}

 \usepackage[normalem]{ulem}
 
\renewcommand\sout{\bgroup\markoverwith
 {\textcolor{red}{\rule[0.7ex]{3pt}{1.4pt}}}\ULon}

 \usepackage{url}
 \usepackage[top=1in, bottom=1.5in, left=1.0in, right=1.0in]{geometry}
 \usepackage{booktabs}
\usepackage[font=bf]{caption}

\usepackage{microtype}

\usepackage[all]{xy}

 \usepackage{color}                

\definecolor{darkblue}{rgb}{0.2, 0., .7}
\newcommand{\End}{\operatorname{End}}

\newcommand{\Ind}{\operatorname{Ind}}

\newcommand{\C}{\mathbb C}

\newcommand{\R}{\mathbb R}

\newcommand{\Z}{\mathbb Z}

\newcommand{\maC}{\mathcal C}
\newcommand{\maD}{\mathcal D}

\newcommand{\maH}{\mathcal H}
\newcommand{\maK}{\mathcal K}
\newcommand{\maL}{\mathcal L}

\newcommand{\maP}{\mathcal P}
\newcommand{\maQ}{\mathcal Q}

\newcommand{\oid}{\operatorname{Id}}

\newtheorem{theorem}{Theorem}[section]
\newtheorem{lemma}[theorem]{Lemma}
\newtheorem{proposition}[theorem]{Proposition}
\newtheorem{corollary}[theorem]{Corollary}

\theoremstyle{definition}
\newtheorem{definition}[theorem]{Definition}
\newtheorem{remark}[theorem]{Remark}

\newtheorem*{proposition*}{Proposition}
\newtheorem*{definition*}{Definition}
\newtheorem*{theorem*}{Theorem}

\title[Fredholm Criteria for $G$-pseudodifferential Operators]{Fredholm Criteria for  $G$-pseudodifferential Operators
}
  
\author[A. Baldare]{Alexandre Baldare}
\address{Alexandre Baldare. Leibniz University Hannover, Institute of Analysis, Welfengarten 1, 30167 Hannover, Germany}
\email{alexandre.baldare@ac-montpellier.fr}
\urladdr{https://baldare.github.io/Baldare.Alexandre/}

\author[A.Yu. Savin]{Anton Yu. Savin}
 \address{Anton Yu. Savin.  
Peoples' Friendship University of Russia (RUDN University),
6 Miklukho-Maklaya St
117198 Moscow, Russia
}
\email{antonsavin@mail.ru}
\urladdr{https://eng.rudn.ru/science/rudn-scientists/anton-yu-savin/}

\author[E. Schrohe]{Elmar Schrohe}
 \address{Elmar Schrohe. 
 Leibniz University Hannover, Institute of Analysis, Welfengarten 1, 30167 Hannover, Germany}
\email{schrohe@math.uni-hannover.de}
\urladdr{https://www.analysis.uni-hannover.de/de/schrohe/}

  \thanks{{\em Key words:} Fredholm operator, $C^*$-algebra, 
   pseudodifferential operator, group actions. \\
   {\em \hspace*{1em} AMS Subject classification:} 
   47A53, 
   58J40, 
    47L80, 
    46N20. 
     }

\begin{document}

\begin{abstract}
Let $G$ be a compact Lie group that acts smoothly on a closed manifold $M$. 
Using a general Simonenko principle, we derive a novel criterion for the Fredholm property of $G$-pseudodifferential operators acting on Sobolev spaces of sections of vector bundles over $M$. 
In case the group is finite, we obtain  a further characterization of the Fredholm property of
$G$-pseudodifferential operators in terms of the invertibility of suitable symbols. 
\end{abstract}

\maketitle \tableofcontents

\section*{Introduction}

This paper is devoted to the investigation of Fredholm criteria for a certain class of operators on closed manifolds. More precisely, let $G$ be a compact Lie group which acts smoothly on a compact manifold $M$ without boundary. There is no loss of generality in assuming that $M$ is Riemannian and that $G$ acts on $M$ by isometries. Let $E^\pm \rightarrow M$ be $G$-equivariant vector bundles equipped with $G$-invariant Hermitian metrics. We are interested in so-called $G$-pseudodifferential operators, i.e., operators of the form
\begin{equation}\label{eq.intro.Gop}
P + \maD : H^{s}(M,E^+) \rightarrow H^{s-m}(M,E^-),\quad (P + \maD) u=Pu + \int_G D_g (T_gu) \ dg,
\end{equation}
where $P$ is a pseudodifferential operator of order $m$, $D_g$, $g\in G$, is a smooth family of pseudodifferential operators of order $m$, $T_g$, $g\in G$,  is a family of so-called shift operator defined by
$T_gu(x)=g(u(g^{-1}x))$, and $dg$ is the normalized Haar measure of $G$; $H^s(M,E^+)$ and $H^{s-m}(M,E^-)$ are the standard Sobolev spaces of sections of $E^\pm$. 

When the operator $P$ is elliptic, it is in particular Fredholm.  Therefore, if $P$ is elliptic, using a parametrix for the operator $P$, we see that we can assume that $E^+=E^-=E$, $P=\oid$, and that $D_g$ is a smooth family of pseudodifferential operators of order $0$ acting on the Hilbert space $L^2(M,E)$ of square integrable sections of $E$, see Section~\ref{sec.preliminaries}. 
For a general reference on the class of $G$-pseudodifferential operators we refer to 
\cite{AL94,
ABL98b,Savin.unif,Sternin.unif} and the references therein.  

\medskip
In Section \ref{sec.Simonenko}, we use the main theorem of \cite{Baldare.Simonenko} in order to derive a Fredholm criterion for $G$-pseudodifferential operators. More precisely, we use a local principle  \`a la Simonenko \cite{simonenko1965new}, see also \cite{allan,douglas} for more general local principles. We would like to point out that local principles were intensively 
used to obtain Fredholm conditions for many classes of singular operators, see for examples 
\cite{ABL98b,bottcher,Kisil_2012,PlSe2,NaSaSt4,Schulze.localPrinciple,semenyuta
} and the references therein. In particular in \cite{Baldare.Simonenko}, the first author of this paper 
showed a general Simonenko principle using $C^*$-algebra techniques and then derived from this result a Fredholm criterion   for the restriction of $G$-invariant pseudodifferential operators to isotypical components. In this paper, we apply the same method to characterize Fredholm $G$-pseudodifferential operators, and we obtain the following result; see Section~\ref{sec.Simonenko} for more details.

\begin{theorem*}[Simonenko's principle for $G$-operators] 
\em Assume that $M$ does not contain any clopen orbit. 
Let $\maD$ be a $G$-pseudodifferential operator as in Equation \eqref{eq.intro.Gop}. Then
\begin{enumerate}\renewcommand\labelenumi{\rm(\alph{enumi})}
\item  $\oid + \maD$ commutes modulo compact operators with $\maC(M)^G=\maC(M/G)$.
\item $\oid + \maD: L^2(M,E) \to L^2(M,E)$ is Fredholm if, and only if, it  is locally invertible on $M/G$.
\end{enumerate}
\end{theorem*}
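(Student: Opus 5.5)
Part (a) comes first, since it is what makes the local principle applicable. Take $f\in\maC(M)^G$, acting on $L^2(M,E)$ by multiplication. Because $f$ is $G$-invariant, $T_g f = (f\circ g^{-1})T_g = fT_g$ for every $g\in G$. Hence
\[
[f,\maD]u=\int_G [f,D_g]\,T_g u\,dg .
\]
For $f$ smooth, $[f,D_g]$ is a pseudodifferential operator of order $-1$ depending continuously on $g$ in operator norm. So it is compact on $L^2$. Since $G$ is compact and $T_g$ is strongly continuous and uniformly bounded, the Bochner integral of $g\mapsto [f,D_g]T_g$ is a norm limit of Riemann-type sums of compact operators. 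We make this precise by approximating $g\mapsto[f,D_g]$ uniformly by piecewise constant families. Thus $[f,\maD]$ is compact.

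For general $f\in\maC(M)^G$ we use density. Averaging smooth approximants over $G$ shows that $\maC^\infty(M)^G$ is dense in $\maC(M)^G$. The map $f\mapsto[f,\maD]$ is norm continuous, and the compacts are closed, so the commutator is compact for all such $f$. The identification $\maC(M)^G=\maC(M/G)$ is standard.

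For (b) I will invoke the general Simonenko principle of \cite{Baldare.Simonenko}. Its setting is a unital commutative $C^*$-algebra $\maC(X)$ acting nondegenerately on a Hilbert space $H$, and an operator $A$ that commutes with $\maC(X)$ modulo compacts. Its conclusion is that $A$ is Fredholm iff $A$ is locally invertible at every $x\in X$, provided a nondegeneracy hypothesis holds: no nonzero $f\in\maC(X)$ acts as a compact operator. Here $X=M/G$, $H=L^2(M,E)$ and $A=\oid+\maD$, and (a) supplies the commutation hypothesis.

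The remaining step, which I expect to be the main obstacle, is verifying that nondegeneracy, and this is where the assumption that $M$ has no clopen orbit enters. Suppose $f\in\maC(M/G)$ is nonzero. Then its pullback to $M$ is nonzero on a $G$-invariant open set $U\subset M$, and multiplication by it is compact only if $L^2(U,E)$ is finite dimensional, i.e., only if $U$ is finite. An open $G$-invariant finite set is a finite union of orbits each of which is open; orbits are compact, hence closed, so such an orbit would be clopen, which is excluded. So multiplication by $f$ is noncompact, and any $f$ acting compactly vanishes.

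Finally I will match the notion of local invertibility in \cite{Baldare.Simonenko} with the one used here. It is: for each orbit $x=Gm$ there exist a neighborhood $\phi\in\maC(M/G)$ equal to $1$ near $x$ and operators $R,L$ with $\phi A R-\phi$ and $L A\phi-\phi$ compact, or equivalently invertibility in the local quotient algebra. With the hypotheses checked, the theorem of \cite{Baldare.Simonenko} yields (b) directly.
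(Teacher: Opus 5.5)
Part (a) is fine and is equivalent to the paper's argument, which shows $M_\phi\maD M_\psi$ is compact for $G$-invariant $\phi,\psi$ with disjoint supports and then uses Kasparov's lemma. The gap is in (b).

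Local invertibility in the statement means Definition~\ref{locinv}. That definition requires the \emph{exact} identities $Q_1^x(\oid+\maD)M_f=M_f=M_f(\oid+\maD)Q_2^x$ for $f\in\maC_c(V_x)$, not identities modulo compacts. The general principle, Proposition~\ref{prop.gen.Simonenko}, needs two things: the standing assumption that $\maC(T)$ meets $\maK(\maH)$ only in $0$, and the hypothesis of strong convergence to $0$. You misquote the principle, replace the definition by a mod-compact one, and never verify strong convergence to $0$.

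With your mod-compact definition the equivalence follows almost at once from (a). A global parametrix is a local one, and conversely local inverses can be patched with a partition of unity in $\maC(M/G)$. The no-clopen-orbit hypothesis then plays no role, which signals that this is not the statement at hand.

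With the paper's definition the hypothesis is genuinely needed, but not for the reason you give. Take $M=G=S^1$ acting on itself by rotations, $E$ trivial, and $\maD=-\int_G T_g\,dg$. Then $\oid+\maD=\oid-\Pi$, where $\Pi$ is the rank-one projection onto constants. This operator is Fredholm. But local invertibility at the unique point of $M/G$ would force $Q_1(\oid-\Pi)=\oid$, which is impossible. Your nondegeneracy condition still holds here, since multiplication by a nonzero constant is not compact. So that check cannot be where the assumption enters.

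What is missing is the verification of strong convergence to $0$. Let $\pi\colon M\to M/G$ be the quotient map. For $x\in M/G$, $h\in L^2(M,E)$ and $\chi\in\maC(M/G,[0,1])$ supported in $V$, one has $\|M_\chi h\|\le\|h\|_{L^2(\pi^{-1}(V))}$. Orbits are compact, so an open orbit would be clopen. Hence every orbit $\pi^{-1}(x)$ is a compact submanifold of positive codimension, i.e.\ a null set. Dominated convergence along a shrinking sequence of neighbourhoods then gives $\|h\|_{L^2(\pi^{-1}(V))}\to0$.

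This is exactly what upgrades local inverses modulo compacts to exact ones. Suppose $L(\oid+\maD)M_\phi=M_\phi+K$ with $K$ compact and $\phi=1$ near $x$. Choose $\chi$ with $\phi\chi=\chi$ and $\|KM_\chi\|<1$; this is possible because $\|KM_\chi\|=\|M_\chi K^*\|$ and $K^*$ is compact. Then $(\oid+KM_\chi)^{-1}L(\oid+\maD)M_f=M_f$ for all $f$ supported where $\chi=1$, and similarly on the right.

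In the paper, the statement is the case $\maP=\emptyset$ of Theorem~\ref{thm.Simonenko.Gop}, where $\maP$ is precisely the union of clopen orbits. There the hypothesis is used exactly to secure strong convergence to $0$.
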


See Section \ref{sec.Simonenko} for the precise definition of locally invertible
operators. 
\medskip

In Section~\ref{sec.finite} we consider the case, where the group is finite. To avoid any confusion, 
we shall then  denote our group by $\Gamma$.  So  $\Gamma$ is a finite group acting by isometries on $M$ and $E$, and we consider $\Gamma$-pseudodifferential operators, i.e., operators of the form 
\begin{eqnarray}
\label{operatorsD}
\maD = \sum_{\gamma\in \Gamma}D_\gamma T_\gamma: L^2(M,E) \to L^2(M,E),
\end{eqnarray}
where the $\maD_\gamma$ are pseudodifferential operators of order zero acting on sections of $E$, and the $T_\gamma$ are shift operators as before. 
As $\Gamma$ is finite, the identity operator is already included in this class; this explains the simpler form compared to \eqref{eq.intro.Gop}.  

 We start by recalling the pseudodifferential uniformization principle shown in \cite{Sternin.unif}, see also \cite{SaScSt15} or \cite{Savin.unif} in our simpler case of finite groups.
In particular, this allows to view a $\Gamma$-pseudodifferential operator 
$\maD : L^2(M,E) \rightarrow L^2(M,E)$ as the restriction to $\Gamma$-invariant sections of a $\Gamma$-invariant pseudodifferential operator $\hat{\maD} : L^2(M,E \otimes \C[\Gamma]) \rightarrow L^2(M,E \otimes \C[\Gamma])$ with coefficients in the Hermitian vector bundle $E \otimes \C[\Gamma]$. We then use the main result of \cite{BCLN2} to characterize the Fredholm property of the $\Gamma$-invariant pseudodifferential operator $\hat{\maD}$ restricted to the subspaces of invariant sections in terms of its principal symbol. 

In order to simplify the statement of our result, we first assume that $M/\Gamma$ is connected 
and let $\Gamma_0$ be a minimal isotropy subgroup of $\Gamma$, see \cite{Bredon,tomDieckTransBook}. 
We obtain the following theorem.

\begin{theorem*}\em
Let $\maD$ be a $\Gamma$-operator as in \eqref{operatorsD}. 
Then the following assertions are equivalent.
\begin{enumerate}\renewcommand\labelenumi{\rm(\roman{enumi})} 
\item The operator $ \maD:L^2(M,E)\to L^2(M,E)$ is Fredholm.
\item The restriction to $L^2(M,E \otimes \C[\Gamma])^{\Gamma}$ 
of the pseudodifferential $\Gamma$-invariant operator $\hat{\maD}$ introduced in Theorem~\ref{thm.unif} is Fredholm.
\item The restriction of the principal symbol
\begin{equation*}
\sigma_0(\hat{\maD})(\xi) : \big(E_\xi \otimes \C[\Gamma]\big)^{\Gamma_0} \rightarrow \big(E_\xi \otimes \C[\Gamma]\big)^{\Gamma_0} 
\end{equation*}
is invertible for all $ \xi \in S^*M^{\Gamma_0}$.
\end{enumerate} 
\end{theorem*}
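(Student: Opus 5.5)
The plan is to prove (i)$\Leftrightarrow$(ii) by the uniformization principle (Theorem~\ref{thm.unif}), and (ii)$\Leftrightarrow$(iii) by the Fredholm criterion of \cite{BCLN2} for restrictions of invariant operators to isotypical components.

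For (i)$\Leftrightarrow$(ii), I will use the unitary identification
\[
\Phi: L^2(M,E)\to L^2(M,E\otimes\C[\Gamma])^\Gamma,\qquad (\Phi u)(x)=\frac{1}{\sqrt{|\Gamma|}}\sum_{\gamma\in\Gamma}(T_\gamma u)(x)\otimes e_\gamma ,
\]
with $\Gamma$ acting diagonally on $E\otimes\C[\Gamma]$. Theorem~\ref{thm.unif} states that $\maD=\Phi^{-1}\hat{\maD}|_{L^2(M,E\otimes\C[\Gamma])^\Gamma}\Phi$, where $\hat{\maD}$ is the $\Gamma$-invariant matrix pseudodifferential operator with entries $\hat{\maD}_{\alpha\beta}=T_\alpha D_{\alpha^{-1}\beta}T_\alpha^{-1}$, up to the paper's conventions. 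Because $\Phi$ is unitary, $\maD$ and the restriction of $\hat{\maD}$ are unitarily equivalent. The Fredholm property is invariant under unitary equivalence, so (i) and (ii) are equivalent. The only work here is checking that the formula for $\hat{\maD}$ really intertwines with $\Phi$, and that is a direct computation using $T_\alpha T_\beta=T_{\alpha\beta}$.

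For (ii)$\Leftrightarrow$(iii), I will apply the main theorem of \cite{BCLN2} to the trivial representation. That theorem says: for a $\Gamma$-invariant order-zero operator $Q$ on a bundle $F$, the restriction $Q|_{L^2(M,F)^\Gamma}$ is Fredholm if and only if, for every $\xi\in S^*M$, the symbol $\sigma_0(Q)(\xi)$ restricted to the invariants $F_\xi^{\Gamma_\xi}$ is invertible. Here $\Gamma_\xi$ is the isotropy group of $\xi$; only those $\xi$ for which the trivial representation occurs in the induced representation matter, which holds for all $\xi$. I take $F=E\otimes\C[\Gamma]$ and $Q=\hat{\maD}$.

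It remains to reduce from all isotropy groups to the minimal one $\Gamma_0$. Since $M/\Gamma$ is connected, the principal orbit type theorem says the set of points whose isotropy is conjugate to $\Gamma_0$ is open and dense, and $\Gamma_0$ is contained, up to conjugation, in every isotropy group. Conjugating by $\gamma\in\Gamma$ turns invertibility at $\xi$ into invertibility at $\gamma\xi$, by invariance of $\sigma_0(\hat{\maD})$. So the condition over $\{\xi:\Gamma_\xi\sim\Gamma_0\}$ is equivalent to the condition over $S^*M^{\Gamma_0}$ restricted to $\xi$ with $\Gamma_\xi=\Gamma_0$.

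The main obstacle is the points of $S^*M^{\Gamma_0}$ with larger isotropy $H\supsetneq\Gamma_0$. There I must show two things: invertibility on $F_\xi^{\Gamma_0}$ holds if and only if it holds on the principal stratum, by density and continuity; and it implies invertibility on $F_\xi^{H}\subset F_\xi^{\Gamma_0}$, which is immediate because $\sigma_0$ commutes with $H$. The converse direction is delicate, because invertibility is an open condition and does not pass to limits directly. I plan to use the structure of $\C[\Gamma]$: as a $\Gamma$-module, $F_\xi=E_\xi\otimes\C[\Gamma]$ is free, so $\dim F_\xi^{H}=\dim E\cdot|\Gamma|/|H|$. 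The symbol on $F_\xi^{\Gamma_0}$ decomposes according to the $H$-action into blocks equivalent to $F_{\eta}^{H'}$ for nearby points. I expect to prove this by identifying $\sigma_0(\hat{\maD})(\xi)$ on $F_\xi^{\Gamma_0}$ with $\bigoplus_{\gamma\in\Gamma/\Gamma_0}\sigma_0(\maD)$-type data given by the explicit matrix form. If that identification fails, I will instead cite the precise version of \cite{BCLN2}, which may already be phrased over the minimal stratum $S^*M^{\Gamma_0}$ when $M/\Gamma$ is connected.
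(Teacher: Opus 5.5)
\textbf{Step (i)$\Leftrightarrow$(ii).} This part is fine and is the paper's argument. Your $\Phi$ is, up to normalization, the composite $QV$ from Lemma~\ref{lem.iso.commut} and Theorem~\ref{thm.unif}. Your matrix entries $T_\alpha D_{\alpha^{-1}\beta}T_\alpha^{-1}$ agree with \eqref{QDIdQ}.

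\textbf{Step (ii)$\Leftrightarrow$(iii): the gap.} The criterion you attribute to \cite{BCLN2} asks for invertibility of $\sigma_0(Q)(\xi)$ on $F_\xi^{\Gamma_\xi}$ for all $\xi$. This is necessary but not sufficient. The ``delicate converse'' you plan to prove is false, even for the free module $F=E\otimes\C[\Gamma]$, so no block decomposition can rescue it.

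Here is a counterexample. Let $\Gamma=\{e,g\}$ act on $S^2\subset\R^3$ by $x_3\mapsto -x_3$, so $\Gamma_0=\{e\}$, and let $E$ be the trivial line bundle. Let $S$ be a $\Gamma$-invariant scalar operator of order $0$ with principal symbol $s(x,\xi)=\langle \xi,e_3\rangle^2/|\xi|^2+x_3^2$. This symbol vanishes exactly on the $g$-fixed covectors, i.e.\ at points of the equator, on covectors tangent to it.

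Set $\maD=\tfrac12(\oid+S)+\tfrac12(\oid-S)T_g$. Formula \eqref{QDIdQ} gives $\hat{\maD}=\oid\otimes\pi_++S\otimes\pi_-$, where $\pi_\pm$ are the projections of $\C[\Gamma]$ onto $\C(\delta_e\pm\delta_g)$.
\begin{itemize}
\item At a fixed covector, $\sigma_0(\hat{\maD})(\xi)=\operatorname{diag}(1,0)$, which is invertible on $F_\xi^{\Gamma_\xi}=\C(\delta_e+\delta_g)$.
\item At every other covector, $\Gamma_\xi$ is trivial and $\operatorname{diag}(1,s(\xi))$ is invertible.
\end{itemize}
So your criterion holds. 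Yet $\maD$ acts as $\oid$ on functions even in $x_3$ and as $S$ on odd ones. Take $u_n=n^{1/4}\phi(y)e^{iny}\psi(n^{1/2}x_3)$, with $\psi$ odd and $y$ the longitude. This sequence is odd, normalized and weakly null. It concentrates on $g$-fixed covectors, so $\|Su_n\|\to 0$. Hence $\maD$ is not Fredholm. This agrees with (iii), which fails here because $\operatorname{diag}(1,0)$ is not invertible on $F_\xi^{\Gamma_0}=F_\xi$.

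\textbf{The missing idea.} Fredholmness is governed by the closure of the principal-stratum data. As $\eta\to\xi$ with $\Gamma_\eta=\Gamma_0$, the operators $\sigma_0(\eta)|_{F_\eta^{\Gamma_0}}$ converge to $\sigma_0(\xi)|_{F_\xi^{\Gamma_0}}$. Invertibility of the latter amounts to invertibility on every $H$-isotypical component $\rho$ of $F_\xi$ with $\rho^{\Gamma_0}\neq 0$, where $H=\Gamma_\xi$, not only on the trivial one. Invertibility is an open condition, so this cannot be recovered from pointwise data on $F_\xi^{\Gamma_\xi}$. For the same reason, your density-and-continuity argument runs in the wrong direction.

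This is why Theorem~\ref{thm.BCLN} (\cite[Theorem~1.5 and Proposition~5.9]{BCLN2}) is already formulated over $T^*M^{\Gamma_0}\smallsetminus\{0\}$ with $\Gamma_0$-invariants. Your fallback, applying it directly with $P=\hat{\maD}$ and $F=E\otimes\C[\Gamma]$, is exactly the paper's proof. It is the only viable form of your second step, and the reduction from $\Gamma_\xi$ to $\Gamma_0$ should be dropped.
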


We finally explain how to relax the hypothesis that $M/\Gamma$ is connected, see Proposition \ref{prop.reduction}, and discuss some standard cases:  trivial actions, topologically free actions and some non-topologically free actions. 

In  \cite[Theorem 4]{Antonevich1973} Antonevich has shown that the invertibility of the principal symbol 
\begin{equation*}
\sigma_0(\hat{\maD})(\xi) : E_\xi \otimes \C[\Gamma] \rightarrow E_\xi \otimes \C[\Gamma]  
\end{equation*}
is sufficient for the Fredholm property of $\maD$ and that it is also necessary,  if the action of $\Gamma$ is efficient. If the action is efficient, a minimal isotropy group is trivial, and we recover Antonevich's result. On the other hand, if a minimal isotropy group is not trivial, we  obtain a new necessary and sufficient condition for the Fredholm property.

\medskip

\noindent
{\em{Acknowledgements.}} The authors would like to thank RFBR,  project number 21-51-12006, and DFG, project SCHR 319/10-1 as well as the DFG Priority Programme 2026 `Geometry at Infinity'   
    for their support.
 The first named author would like to express his gratitude to V. Nistor for useful discussions, suggestions and encouragement during the redaction of this paper.   
 He also wishes to dedicate this work to the memory of M. Benameur, his doctoral advisor, who passed away in September 2025 and who taught him to reach for the stars -- and $C^*$-algebras were the closest ones.

\section{Preliminaries}\label{sec.preliminaries}

Let $G$ be a compact Lie group which acts smoothly on a compact Riemannian manifold $M$ without boundary. 
Without loss of generality, we assume that $M$ is endowed with a $G$-invariant metric and that $G$ acts by isometries.
Let $E^\pm \rightarrow M$ be $G$-equivariant vector bundles equipped with $G$-invariant Hermitian metrics.
We shall denote by $\psi^m(M,E^+,E^-)$ the space of  
classical pseudodifferential operators of order $m$, acting between
sections of the $G$-equivariant vector bundles $E^\pm \rightarrow M$.
As pointed out above, we are interested in operators of the form
\begin{equation*}
P + \maD : C^\infty(M,E^+) \rightarrow C^\infty(M,E^-),\qquad (P + \maD) s(x)=Ps(x) + \int_G D_g (T_gs) (x) dg,
\end{equation*}
where $P \in \psi^m(M,E^+,E^-)$ is elliptic, $D_g \in \psi^m(M,E^+,E^-)$, $g\in G$, is a smooth family of pseudodifferential operators, $T_g$, $g\in G$,  are given by 
$T_gs(x)=g(s(g^{-1}x))$ and $dg$ is the normalized Haar measure of $G$.

Let us introduce the class $\psi^m_G(M,E^+,E^-)$ of $G$-pseudodifferential operators, 
that is operators of the form $\maD = \int_G D_g T_g\ dg$ as before.
The class $\psi^m_G(M;E^+,E^-)$ coincides with 
the class of operators of the form
$\int_G T_g D_g\ dg$ because $\int_G T_gD_g\ dg = \int_G T_g D_g T_{g^{-1}} T_g\ dg$ and 
$T_g D_g T_{g^{-1}} \in \psi^m(M,E^+,E^-)$ is a smooth family 
(with respect to $g\in G$) of pseudodifferential operators,
see \cite{AS1,Sternin.unif}.

Let $\nabla^\pm$ be  metric preserving, $G$-invariant connections on $E^\pm$. 
In the sequel, $\Delta^\pm := \nabla^{\pm *} \nabla^\pm$ will
denote the (positive) Laplacian on $M$ with coefficients
in $E^\pm$.  Since $M$ is compact, the Sobolev spaces $H^s(M, E^\pm)$ can be defined for any
$s \in [0,\infty)$ as the domain of $(\oid + \Delta^\pm)^{\frac{s}{2}}$ in $L^2(M, E^\pm)$, so that 
$(\oid + \Delta^\pm)^{\frac{s}{2}} : H^s(M, E^{\pm}) \rightarrow L^2(M, E^\pm)$ is
an isomorphism. For $s < 0$, we define $H^s(M, E^\pm)$ as the dual of $H^{-s}(M, E^\pm)$ with
respect to the pairing defined by the $L^2$-inner product. 
In order to simplify the notation, we shall denote $\Delta^\pm$ 
by $\Delta$ in the sequel 
since the notation should be clear from the context.

Recall that a  classical pseudodifferential operator
$P \in \psi^m(M;E^+,E^-)$ extends to a bounded linear map 
\begin{equation*}
P : H^{s}(M,E^+) \rightarrow H^{s-m}(M,E^-).
\end{equation*}
Furthermore, if $P \in \psi^m(M;E^+,E^-)$ and $\maD \in \psi^m_G(M,E^+,E^-)$ 
then $P+\maD$ extends to a bounded operator
\begin{equation}
P+\maD : H^s(M,E^+) \rightarrow H^{s-m}(M,E^-),
\end{equation}
with norm controlled by 
\begin{equation*}
\|P+\maD\| \leq \|P\| + \int_G \|D_g T_g\| dg \le \|P\| + \int_G \|D_g\| dg.
\end{equation*}


We shall denote by $\maL(H^s(M, E^+), H^{s-m}(M, E^-))$ the set of bounded linear maps 
from $ H^s(M, E^+)$ to $H^{s-m}(M, E^-)$. The closures of
$\psi^0(M; E^+,E^-)$ and $\psi^0_G(M; E^+,E^-)$, respectively, in the norm topology of
$\maL(L^2(M, E^+), L^2(M, E^-))$ 
will be denoted in the sequel by  
$\Psi(M;E^+,E^-)$ and  $\Psi_G(M;E^+,E^-)$, respectively.

When $E^+=E^-=E$, we shall use the standard conventions $\maL(L^2(M,E),L^2(M,E)) = \maL(L^2(M,E))$,
$\psi^m(M;E,E)=\psi^m(M,E)$, $\psi^m_G(M;E,E)=\psi^m_G(M,E)$, 
$\Psi(M;E,E)=\Psi(M,E)$ and $\Psi_G(M;E,E)=\Psi_G(M,E)$.
The sets $\Psi(M,E)$ and $\Psi_G(M,E)$ 
are sub-$C^*$-algebras of $\maL(L^2(M,E))$. 
Notice that in general $\Psi_G(M,E)$ is non-unital, i.e., in general $\oid \not \in \Psi_G(M,E)$.

We rely on Atkinson's theorem to characterize Fredholm operators: 
A bounded linear operator is Fredholm if, and only if, it is invertible modulo compact operators.
We recall the following standard lemma concerning order reduction for $G$-operators, see e.g. \cite{BCLN1}.
\begin{lemma}
\begin{enumerate}\renewcommand\labelenumi{\rm (\alph{enumi})}
\item A bounded operator $A : H^s(M, E^+) \rightarrow H^{s-m}(M, E^-)$ is Fredholm if, and only if, 
$\tilde{A} := (\oid + \Delta)^{\frac{s-m}{2}} A (\oid + \Delta)^{-\frac{s}{2}} 
				: L^2(M, E^+) \rightarrow L^2(M, E^-)$ 
is Fredholm.
\item Let $A \in \maL(H^s(M,E^+),H^{s-m}(M,E^-))$ be a Fredholm operator, $S$ an inverse modulo compact operators for $A$, and  $B \in \maL(H^s(M,E^+), H^{s-m}(M,E^-))$.
Then
 $A+B$ is Fredholm if, and only if, $\oid + SB$ is Fredholm.  
\item Let  $E=E^+ \oplus E^-$. Then 
$P \in \maL(L^2(M,E^+),L^2(M,E^-))$ is Fredholm if, and only if, 
$\begin{pmatrix}
0& P^*\\
P&0
\end{pmatrix}\in \maL(L^2(M,E))$ is Fredholm.
\item  If $P$ is the limit in the norm
topology of $\maL(H^s(M, E^+), H^{s-m}(M, E^-))$ of a sequence of operators $P_n \in \psi^m(M, E^+,E^-)$, 
then $\tilde {P}$, defined as in (a), belongs to $\Psi(M,E^+,E^-):=\overline{\psi^0(M; E^+,E^-)}$.
\item If $\maD$ is the limit in the norm
topology of $\maL(H^s(M, E^+), H^{s-m}(M, E^-))$ of a sequence of operators $\maD_n \in \psi^m_G(M, E^+,E^-)$, 
then $\tilde{\maD}$, defined as in (a) belongs to $\Psi_G(M,E^+,E^-):=\overline{\psi^0_G(M; E^+,E^-)}$.
\item If $R$ is the limit in the norm
topology of $\maL(H^s(M, E^+), H^{s-m}(M, E^-))$ of a sequence of operators $P_n+\maD_n \in \psi^m(M;E^+,E^-)+\psi^m_G(M, E^+,E^-)$, an $\tilde R$ is as in (a), 
then $\tilde{R} \in \Psi_G^{\mathrm{full}}(M,E^+,E^-):=\overline{\psi^0(M;E^+,E-)+\psi^0_G(M; E^+,E^-)}$.
\end{enumerate}
\end{lemma}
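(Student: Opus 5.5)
The plan is to reduce every part to facts about the operators $\Lambda_t:=(\oid+\Delta)^{t/2}$. For each real $t$, $\Lambda_t$ is an elliptic classical pseudodifferential operator of order $t$. It is an isomorphism $H^r(M,E^\pm)\to H^{r-t}(M,E^\pm)$ with inverse $\Lambda_{-t}$. Since the connections $\nabla^\pm$ and the metrics are $G$-invariant, $\Lambda_t$ commutes with every shift operator $T_g$.

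For (a), note that $\tilde A=\Lambda_{s-m}A\Lambda_{-s}$ is the composition of $A$ with two isomorphisms. Kernel and cokernel dimensions, and closedness of the range, are preserved under such compositions, so $A$ is Fredholm if and only if $\tilde A$ is.

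For (b), write $SA=\oid+K$ with $K$ compact. Then $S(A+B)=\oid+SB+K$. If $A+B$ is Fredholm, then $S(A+B)$ is a product of Fredholm operators (since $S$ is Fredholm), so $\oid+SB$ is Fredholm. Conversely, if $\oid+SB$ is Fredholm with parametrix $R$, then $RS$ is a left parametrix of $A+B$. For the right side, use $AS=\oid+K'$ and $(A+B)S=\oid+BS+K'$, together with the standard identity relating $\oid+SB$ and $\oid+BS$: one is Fredholm if and only if the other is, via $(\oid+BS)B=B(\oid+SB)$, or simply by index and Atkinson arguments. Alternatively, $A+B=A(\oid+SB)-KB'$ up to compacts, written as $A+B\equiv A(\oid+SB)$ modulo compacts using $AS\equiv\oid$. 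Hence $A+B$ is a product of Fredholm operators modulo compacts, and the converse is symmetric.

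For (c), the block operator has kernel $\ker P\oplus\ker P^*$. Its range is closed exactly when the range of $P$ is closed, since $P$ has closed range if and only if $P^*$ does. So the block operator is Fredholm if and only if $P$ is.

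For (d)–(f), the key point is that conjugation by the $\Lambda$'s maps norm-convergent sequences to norm-convergent sequences:
\[
\|\tilde P-\tilde P_n\|_{\maL(L^2)}\le\|\Lambda_{s-m}\|\,\|P-P_n\|\,\|\Lambda_{-s}\|.
\]
So it suffices to show that each $\tilde P_n$ lies in the corresponding class of order zero. For $P_n\in\psi^m$ this is the composition theorem for classical pseudodifferential operators: $\tilde P_n\in\psi^0$. For $\maD_n=\int_G D_gT_g\,dg$ we use that $\Lambda_{-s}$ commutes with $T_g$. This gives
\[
\tilde\maD_n=\int_G \Lambda_{s-m}D_g\Lambda_{-s}\,T_g\,dg,
\]
where $g\mapsto\Lambda_{s-m}D_g\Lambda_{-s}$ is again a smooth family in $\psi^0$. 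Hence $\tilde\maD_n\in\psi^0_G$, which proves (e). Part (f) follows by linearity of $A\mapsto\tilde A$.

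The main obstacle I expect is justifying that composition preserves smoothness in $g$. Smoothness of the family $g\mapsto D_g$ must be understood in the Fréchet topology of $\psi^m$, and composition with fixed pseudodifferential operators is continuous in that topology. I would cite this continuity, along with the commutation $[\Lambda_t,T_g]=0$, which follows from $G$-invariance of $\Delta$.
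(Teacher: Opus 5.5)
Your proposal is correct and follows the paper's proof essentially step by step. The shared ingredients are: $(\oid+\Delta)^{t/2}$ is an isomorphism of the Sobolev scale for (a); Atkinson's theorem for (b); and for (d)--(f), Seeley's theorem, the commutation of $(\oid+\Delta)^{t/2}$ with $T_g$, and norm-continuity of composition. Two small remarks: in your cleanest argument for the converse in (b), the identity should read $A+B = A(\oid+SB) - K'B$ (you wrote $KB'$), and your remark on the smoothness of $g\mapsto(\oid+\Delta)^{\frac{s-m}{2}}D_g(\oid+\Delta)^{-\frac{s}{2}}$ fills in a point the paper only asserts.
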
 

\begin{proof}
(a) The operator $P$ is Fredholm if, and only if, $\tilde{P}$ is Fredholm, because 
 $(\oid +\Delta)^{s} : H^m(M,E^\pm) \rightarrow H^{m-2s}(M,E^\pm)$ is an isomorphism for all $m \in \R$.
 
(b) This is a trivial consequence of Atkinson's theorem. 

(c) This is clear. 

(d)  According to \cite{Seeley.Comp.Power.ellip}, the complex powers of the Laplace operator are classical pseudodifferential operators. Hence the result is a consequence of the
continuity of the multiplication in the operator norm.

(e) Notice that if $\maD \in \psi^m_G(M;E^+,E^-)$ then $\tilde{\maD}\in \psi^0_G(M;E^+,E^-)$.
 Indeed, the $G$-invariance of the Laplace operator implies that 
$(\oid + \Delta)^{\frac{s-m}{2}}D_g T_g (\oid + \Delta)^{-\frac{s}{2}} = 
(\oid + \Delta)^{\frac{s-m}{2}}D_g  (\oid + \Delta)^{-\frac{s}{2}}T_g$ 
and it follows again from Seeley's result \cite{Seeley.Comp.Power.ellip} that 
$(\oid + \Delta)^{\frac{s-m}{2}}D_g  (\oid + \Delta)^{-\frac{s}{2}}$
is an order $0$ classical pseudodifferential operator.
Now assume that $\maD$ is the limit in the norm
topology of $\maL(H^s(M, E^+), H^{s-m}(M, E))$ of a sequence of operators $\maD_n \in \psi^m_G(M, E^+,E^-)$.
Then we get from the previous discussion that $\tilde{\maD}_n \in \psi^0_G(M;E^+,E^-)$.
The result follows then as above from the continuity of the multiplication
in the operator norm.

(f) The last item is completely similar.
\end{proof}

\section{A Simonenko local principle for $G$-pseudodifferential operators}
\label{sec.Simonenko}

We start by recalling the definition of locally invertible operators and the general Simonenko principle shown in \cite{Baldare.Simonenko}, where more references can be found.

Let $\maH$ be a Hilbert space, $T$ a compact Hausdorff space  and  $\maC(T) \hookrightarrow \maL(\maH)$  a non-degenerate faithful
representation (i.e. $\maC(T)$ identifies with its image in $\maL(\maH)$ and the image of the
constant function $1$ is $\oid \in \maL(\maH)$). 
Assume that the image of $\maC(T)$ does not intersect
 $\maK(\maH) \setminus \{0\}$. In other words, we are assuming that $\maC(T)$ is a unital sub-$C^*$-algebra
of the Calkin algebra $\maQ(\maH) := \maL(\maH)/\maK(\maH)$. We shall denote by $M_f$ the image of a
function $f\in \maC(T)$ in $\maL(\maH)$ and call it the multiplication operator by $f$.

Define $\Psi_{T}(\maH) \subset \maL(\maH)$ as the $C^*$-algebra consisting of all
$P \in \maL(\maH)$ such that $M_\phi PM_\psi  \in \maK(\maH)$ is a compact operator 
for all $\phi,\psi \in \maC(T)$ with disjoint support.  
By  Kasparov's lemma (see~\cite[Lemma 5.4.7]{HiRo1})  $\Psi_T(\maH)$ consists of all operators $P$ such that $PM_f - M_f P \in \maK(\maH)$ for all $ f \in \maC(T)$ (pseudolocal operators).


\begin{definition}[see \cite{Baldare.Simonenko,BCLN2}]\label{locinv}
An operator $P \in \maL(\maH)$ is  {\em locally invertible at $x \in T$}  if
there exist:
\begin{enumerate}
\item[(i)] a neighbourhood $V_x$ of $x$ and
\item[(ii)] operators $Q^x_1$ and $Q^x_2 \in \maL(\maH)$
\end{enumerate}
such that, for all $f \in \maC_c(V_x)$
\begin{equation*}
Q^x_1 PM_f = M_f =M_fPQ^x_2 \in \maL (\maH).
\end{equation*}
The operator $P$ is  {\em locally invertible} if it is locally invertible at any $x \in T$.
\end{definition} 

We shall say that $\maC(T) \hookrightarrow \maL(\maH)$ has the property of {\em strong convergence to $0$} if  for all
$x\in T$, all $ h\in \maH$, and all $ \varepsilon >0$, there exists a neighborhood $V$ of $x$
such 
$\|M_{\chi}h\|<\varepsilon$
for every  $\chi \in \maC(T,[0,1])$ that is equal to $1$ on a neighborhood of $x$ and supported in $V$, 
see \cite[Definition 2.9]{Baldare.Simonenko}.

The following was shown in  \cite[Section 2]{Baldare.Simonenko}. 

\begin{proposition}[General Simonenko localization principle]\label{prop.gen.Simonenko} 
Let $P \in \Psi_T(\maH)$.
Assume that $\maC(T)\hookrightarrow \maL(\maH)$ has the property of strong convergence to $0$.
Then
$P$ is locally invertible on $T$ if, and only if, $P$ is Fredholm.
\end{proposition}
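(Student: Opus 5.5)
We prove the two implications separately. The quotient map $\maL(\maH)\to\maQ(\maH)$ is denoted by $\pi$. Because $\maC(T)$ meets $\maK(\maH)$ only in $0$, it embeds as a unital commutative $C^*$-subalgebra of the Calkin algebra. Since $P\in\Psi_T(\maH)$, Kasparov's lemma shows that $\pi(P)$ commutes with $\pi(\maC(T))$. Let $\maC(T)'$ be the commutant of $\pi(\maC(T))$ in $\maQ(\maH)$; it contains $\pi(P)$. Note that $\pi(\maC(T))$ lies in the centre of the $C^*$-algebra generated by $\pi(P)$, $\pi(P)^*$ and $\pi(\maC(T))$.

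\emph{Fredholm $\Rightarrow$ locally invertible.} Let $S$ be a parametrix, so $SP=\oid+K_1$ and $PS=\oid+K_2$ with $K_i$ compact. We want $Q_1^x$ with $Q_1^xPM_f=M_f$ for all $f\in\maC_c(V_x)$, which is an exact identity, not one modulo compacts. Strong convergence to $0$ is what makes this possible.
\begin{enumerate}
\item Approximate $K_1$ in norm by a finite-rank operator $F=\sum_{j} \langle\cdot,u_j\rangle v_j$, so that $\|K_1-F\|<1/2$.
\item Choose $\chi\in\maC(T,[0,1])$ equal to $1$ near $x$ and supported in a small neighbourhood $V$. By strong convergence to $0$ applied to the finitely many $u_j$, we can arrange $\|M_\chi u_j\|$ small. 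Since $F^*$ has range in the span of the $u_j$, this makes $\|FM_\chi\|=\|M_\chi F^*\|$ small.
\item It follows that $\|K_1M_\chi\|<1$. Let $V_x$ be the set where $\chi=1$, or an open neighbourhood of $x$ inside it. Then for $f\in\maC_c(V_x)$ we have $M_f=M_\chi M_f$, hence
\[
SPM_f=(\oid+K_1M_\chi)M_f .
\]
\item Setting $Q_1^x=(\oid+K_1M_\chi)^{-1}S$ gives $Q_1^xPM_f=M_f$.
\end{enumerate}
The right-sided identity is handled symmetrically, using $M_\chi K_2$ and strong convergence applied to the vectors in the range of a finite-rank approximant of $K_2$.

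\emph{Locally invertible $\Rightarrow$ Fredholm.} We localize in the Calkin algebra and patch with a partition of unity.
\begin{enumerate}
\item By compactness of $T$, choose finitely many points $x_k$ with neighbourhoods $V_k$ covering $T$.
\item Take a partition of unity $\phi_k^2$ subordinate to the cover, with each $\phi_k$ supported in $V_k$, together with functions $\psi_k\in\maC_c(V_k)$ equal to $1$ on $\operatorname{supp}\phi_k$.
\item Define $Q=\sum_k M_{\phi_k}Q_1^{x_k}M_{\phi_k}$.
\item Compute modulo compacts, using that $P$ commutes with $M_{\phi_k}$ up to compacts:
\[
QP\equiv\sum_k M_{\phi_k}Q_1^{x_k}PM_{\phi_k}=\sum_k M_{\phi_k}^2=\oid .
\]
\item A right inverse modulo compacts is obtained symmetrically from the $Q_2^{x_k}$.
\end{enumerate}
Atkinson's theorem then shows that $P$ is Fredholm. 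Only pseudolocality is used in this direction. The auxiliary functions $\psi_k$ are not needed, since $M_{\phi_k}$ already has support in $V_k$.

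\emph{Main obstacle.} The delicate step is the converse direction: upgrading invertibility modulo compacts to the exact local identities in the definition. This requires controlling $\|K M_\chi\|$ and $\|M_\chi K\|$ by strong convergence to $0$. In particular one must pass from the vectorwise smallness $\|M_\chi h\|<\varepsilon$ to norm smallness. The tools are the finite-rank approximation of compacts and the observation that strong convergence applied to the finitely many relevant vectors, for one-sided products, suffices.
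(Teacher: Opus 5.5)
Your proof is correct, but it does not follow the paper's route. The paper gives no argument for this proposition; it imports it from \cite[Section 2]{Baldare.Simonenko}, where, according to the introduction, it is obtained by $C^*$-algebra techniques. Presumably this is a localization in the Calkin algebra $\maQ(\maH)$, in which $\maC(T)$ is a unital subalgebra commuting with the image of $\Psi_T(\maH)$.

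You give instead a direct, classical Simonenko-type argument.
\begin{itemize}
\item For Fredholm $\Rightarrow$ locally invertible, you approximate the compact remainders by finite-rank operators and apply strong convergence to $0$ on finitely many vectors. This gives $\|K_1M_\chi\|<1$ and $\|M_\chi K_2\|<1$, using $\|M_\chi\|\le 1$ since $0\le\chi\le 1$. Neumann series then produce the explicit local inverses $(\oid+K_1M_\chi)^{-1}S$ and $S(\oid+M_\chi K_2)^{-1}$. You should intersect the two neighbourhoods so that a single $V_x$ serves both identities.
\item For the converse, you glue the local inverses with a partition $\sum_k\phi_k^2=1$ and use Kasparov's lemma.
\end{itemize}

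Your route is self-contained and gives explicit formulas for the local inverses and the global parametrix. It also shows exactly where each hypothesis is used. Pseudolocality enters only in locally invertible $\Rightarrow$ Fredholm. Strong convergence to $0$ enters only in Fredholm $\Rightarrow$ locally invertible, and your argument proves that direction for every bounded Fredholm operator, pseudolocal or not. The $C^*$-algebraic approach of the cited reference is less elementary, but it places the statement in the central-localization framework in which the paper then applies it, with $T=(M\smallsetminus\maP)/G$.

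The centrality remark in your opening paragraph and the auxiliary functions $\psi_k$ are never used and can be dropped.
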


Before stating the Simonenko principle for $G$-pseudodifferential operators, 
let us recall more material. Let $\mathfrak{g}$ be the Lie algebra of $G$ and 
recall that any $Y \in \mathfrak{g}$ generates a vector field $Y_M(x) := \frac{d}{dt}_{\vert_{t=0}} e^{tY}\cdot x$.
Let $T^*_GM$ be the closed conical subset of $T^*M$ with fiber in $x\in M$ given by 
\begin{equation}\label{Gtransversal}
(T^*_GM)_x:=\{\xi\in T_x^*M,\ \xi(Y_M(x))=0,\ \forall Y \in \mathfrak{g}\}. 
\end{equation}
We also denote $S^*_GM=T^*_GM\cap S^*M$.

The following lemma was shown in \cite{Baldare.Simonenko}. We recall its easy proof for the benefit of the reader.

\begin{lemma}\label{lemma.finite.component}
\begin{enumerate}\renewcommand\labelenumi{\rm (\alph{enumi})}
\item 
Let $x \in M$ be such that $(T^*_GM)_x=\{0\}$. 
Then the orbit of $x$ is a clopen of $M$. (Here, $M$ needs not to be compact.)
\item The set of points $\maP:=\{x \in M, \ (T^*_GM)_x=\{0\}\}$ is a clopen in $M$.
\end{enumerate}
\end{lemma}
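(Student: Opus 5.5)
The idea is to read the condition $(T^*_GM)_x=\{0\}$ as a statement about the orbit map at $x$. Consider $\phi_x : G \to M$, $g\mapsto g\cdot x$, whose image is the orbit $G\cdot x$. Its differential at the identity is $\mathfrak g \to T_xM$, $Y\mapsto Y_M(x)$. By definition \eqref{Gtransversal}, $(T^*_GM)_x$ is exactly the annihilator of $\{Y_M(x): Y\in\mathfrak g\}$. Hence $(T^*_GM)_x=\{0\}$ holds if and only if the vectors $Y_M(x)$ span $T_xM$, that is, if and only if $d\phi_x(e)$ is surjective.

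For (a), I will first transport surjectivity to every point of $G$. Since $\phi_x(hg)=h\cdot\phi_x(g)$, we have $d\phi_x(h)\circ dL_h(e) = dh(x)\circ d\phi_x(e)$, and $dh(x)$ is an isomorphism. So $\phi_x$ is a submersion everywhere, hence an open map.
\begin{itemize}
\item Openness: $G\cdot x=\phi_x(G)$ is open in $M$.
\item Closedness: $G$ is compact, so $\phi_x(G)$ is compact, hence closed in the Hausdorff space $M$.
\end{itemize}
Thus the orbit is clopen. Compactness of $M$ is not used, only compactness of $G$.

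For (b), I will show that $\maP$ is open and closed.
\begin{itemize}
\item $\maP$ is $G$-invariant, because $T_xg$ maps the span of the $Y_M(x)$ onto the span of the $Y_M(gx)$ (using $\mathrm{Ad}$-invariance of $\mathfrak g$).
\item $\maP$ is open: if $x\in\maP$, then by (a) the open set $G\cdot x$ contains $x$ and lies in $\maP$.
\item Alternatively, openness follows from lower semicontinuity of rank. Choosing a basis $Y_1,\dots,Y_k$ of $\mathfrak g$, the condition "$Y_{1,M},\dots,Y_{k,M}$ span $T_xM$" is an open condition in $x$.
\item $\maP$ is closed: it is a union of orbits, each of which is clopen. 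Let $y$ be in the closure of $\maP$. Then $y$ lies in the closure of some orbit $G\cdot x\subset\maP$ only if that orbit accumulates at $y$; since each orbit is closed, I need a different argument.
\end{itemize}

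Closedness of $\maP$ is the step I expect to be the main obstacle. I will handle it via dimension. A point $x$ lies in $\maP$ if and only if $\dim T_xM=\operatorname{rank} d\phi_x(e)$. The rank of $d\phi_x(e)$ is $\dim G-\dim G_x$, and this is at most $\dim M$ near $x$.

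Let $y\in\overline{\maP}$ and take a chart neighbourhood $U$ of $y$ of constant dimension $n$. Points of $\maP\cap U$ have orbits of dimension $n$, so these orbits are open. By the slice theorem, orbit dimension is upper semicontinuous near $y$ only in the wrong direction, so I instead argue directly as follows.

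Take $x_j\in\maP$ with $x_j\to y$. The orbits $G\cdot x_j$ are compact open $n$-dimensional submanifolds inside the connected component $C$ of $M$ containing $y$ (we may assume $x_j\in C$). An open and closed subset of a connected $C$ is all of $C$. So $G\cdot x_j=C$, and hence $y\in C=G\cdot x_j\subset\maP$. This proves that $\maP$ is closed, and therefore clopen.
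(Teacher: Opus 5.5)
Your proof is correct up to one small misstatement, but it takes a different route from the paper in both parts.

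\textbf{Part (a).} The paper invokes the slice theorem. Since $(T^*_GM)_x=\{0\}$, the only slice at $x$ is $S_x=\{x\}$, so the tube $G\times_{G_x}S_x\cong Gx$ is an open neighbourhood of $x$, and it is closed because it is compact. You instead note that $(T^*_GM)_x$ is the annihilator of the image of $d\phi_x(e)$. So the orbit map is a submersion at $e$, hence everywhere by equivariance, hence open. Your argument is more elementary and self-contained: it needs only the local submersion theorem and compactness of $G$. The slice-theorem argument is shorter once that machinery is available, and it contains the same dimension count, since the slice is modelled on the normal space $T_xM/\operatorname{span}\{Y_M(x)\}$.

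\textbf{Part (b).} The paper deduces from (a) that $\maP$ is a union of connected components of $M$. It then uses compactness of $M$ (finitely many components) to conclude that $\maP$ is clopen. Your closure argument needs no compactness of $M$, which reflects the general fact that any union of components of a locally connected space is clopen.

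\textbf{The correction.} When $G$ is disconnected, the orbit $G\cdot x_j$ need not lie inside $C$. For example, take $\Z/2$ swapping two points, or $S^1\times\Z/2$ acting on $S^1\sqcup S^1$ by rotation and swap. So "$G\cdot x_j=C$" can fail. What you should say is that $G\cdot x_j\cap C$ is a nonempty clopen subset of the connected set $C$, hence $C\subseteq G\cdot x_j$. The conclusion $y\in G\cdot x_j\subseteq\maP$ is unaffected.

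\textbf{Tidying.} You do not need a sequence: a single point of $\maP\cap C$ suffices, because $C$ is an open neighbourhood of $y$. The false start about closures of orbits and the remarks on orbit dimension and semicontinuity can simply be deleted.
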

\begin{proof}
(a) Since $(T^*_GM)_x=\{0\}$, we obtain that $S_x=\{x\}$ is the only slice at $x$. From the slice theorem, 
we deduce that the orbit $Gx\cong G\times_{G_x} \{x\}=G \times_{G_x} S_x$ is open but it is also 
compact. Therefore, $Gx$ is a union of connected components of $M$
 in bijection with the connected components of $G/G_x\cong Gx$.
 
(b) It follows from the first part that $\maP$ is a union of connected components of $M$.
Since $M$ is compact, there is only a finite number of connected components.
This implies that $\maP$ is a finite union of clopen and therefore is a clopen. 
\end{proof}

\begin{remark} The set $\maP$ will be empty for example in the following cases:
\begin{enumerate}\renewcommand\labelenumi{\rm (\alph{enumi})}
\item if $M$ is connected and not reduced to a single orbit,
\item if $\dim M > \dim G$.
\end{enumerate}
\end{remark}

Notice that $M$ is the disjoint union of the closed $G$-submanifolds $M\setminus \maP$ and $\maP$. 
Let $\chi_{M\smallsetminus \maP}$ be the characteristic function of $M\smallsetminus \maP$. 
Then the multiplication operator $M_{\chi_{M\smallsetminus \maP}}$ 
by $\chi_{M\smallsetminus \maP}$ is a $G$-invariant orthogonal projection, 
denoted $p$ in the sequel,
with image $L^2(M\smallsetminus \maP , E\vert_{M\smallsetminus \maP})$. 
The corresponding orthogonal decomposition is given by 
$L^2(M,E) =L^2(M\smallsetminus \maP , E\vert_{M\smallsetminus \maP}) \oplus L^2(\maP,E\vert_\maP)$.
Recall that $\maC(M/G)=\maC(M)^G$.

We can now state the main result of this section. Below we consider $L^2(M,E)$ as a $\maC(M/G)$-module.

\begin{theorem}[Simonenko's principle for $G$-operators] \label{thm.Simonenko.Gop}
Let $\maD \in \Psi_{G}(M,E)$ be a $G$-operator as in Section \ref{sec.preliminaries}.
Then $\oid +\maD$ is Fredholm if, and only if, $p \maD p + \oid_{L^2(M\smallsetminus \maP , E\vert_{M\smallsetminus \maP})}$ is locally invertible.
\end{theorem}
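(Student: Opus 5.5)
We split $L^2(M,E)$ along the $G$-invariant projection $p$ and apply Proposition~\ref{prop.gen.Simonenko} on the part $M\smallsetminus\maP$, with $T=(M\smallsetminus\maP)/G$ acting by multiplication. The part over $\maP$ will turn out to be harmless.

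\emph{Step 1: $\oid+\maD$ is Fredholm iff $p\maD p+\oid$ is Fredholm on $pL^2$.} Since $p=M_{\chi_{M\smallsetminus\maP}}$ with $\chi$ a $G$-invariant, locally constant function, $p$ commutes with every $T_g$. It also commutes with every $D_g$ up to compacts; in fact $(1-p)D_gp$ is smoothing, because the supports are disjoint closed sets. Hence $p\maD(1-p)$ and $(1-p)\maD p$ are compact. To see this, note that $g\mapsto (1-p)D_gp$ is a continuous family of compact operators, so its integral is compact; the same then holds on the norm closure $\Psi_G$. Thus $\oid+\maD$ is Fredholm iff both diagonal blocks $p(\oid+\maD)p$ and $(1-p)(\oid+\maD)(1-p)$ are Fredholm.

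On $\maP$ every orbit is open by Lemma~\ref{lemma.finite.component}, so $\maP$ is a finite union of compact orbits. On such an orbit $Gx\cong G/G_x$ the operator $\int_G D_gT_g\,dg$ is compact. The reason is that $(T^*_GM)_x=\{0\}$ everywhere on $\maP$, so averaging over $G$ smooths in all cotangent directions: the Schwartz kernel $\int_G k_{D_g}(x,g y)\,dg$ is obtained by integrating the singular kernel along orbit directions that span the whole tangent space. This is the standard fact that $G$-operators are smoothing in the directions transversal to $T^*_GM$. Hence $(1-p)(\oid+\maD)(1-p)$ is identity plus compact, and it is Fredholm. I would make this precise either by invoking the known description of $\Psi_G$, namely that its elements have symbols vanishing outside $T^*_GM$, or by an explicit kernel computation on the homogeneous space $G/G_x$.

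\emph{Step 2: verify the hypotheses of Proposition~\ref{prop.gen.Simonenko} for $\maH=pL^2=L^2(M\smallsetminus\maP,E)$ and $T=(M\smallsetminus\maP)/G$.}

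First, faithfulness and non-degeneracy are clear. A nonzero multiplication operator $M_f$ is not compact because $M\smallsetminus\maP$ has no isolated points. Note that $M\smallsetminus\maP$ contains no clopen orbit; hence if $\dim M\smallsetminus\maP=0$ the set is empty, and we may assume it is nonempty of positive dimension.

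Second, strong convergence to $0$ holds by dominated convergence. The preimage of a point of $T$ is a single orbit, which has measure zero because it is not open. Therefore $\|M_\chi h\|\to0$ as the support of $\chi$ shrinks to that orbit.

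Third, $p\maD p+\oid\in\Psi_T(\maH)$, i.e. it commutes with $M_f$ modulo compacts for $f\in\maC(M)^G$ (part (a) of the introduction's theorem). For $f$ smooth and $G$-invariant we have $[D_gT_g,M_f]=[D_g,M_f]T_g$, since $T_gM_f=M_{f}T_g$. The commutator $[D_g,M_f]$ has order $-1$, hence is compact, and it depends continuously on $g$; so integrating over $G$ gives a compact operator. General continuous $f$ are handled by density in $\maC(M/G)$, and general elements of $\Psi_G$ by norm limits. Since the compact operators form a closed ideal, all these limits remain compact.

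\emph{Step 3: conclude.} Proposition~\ref{prop.gen.Simonenko} gives that $p\maD p+\oid$ is Fredholm iff it is locally invertible, and combining this with Step 1 proves the theorem.

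The main obstacle is the compactness of $\maD$ restricted over $\maP$ in Step 1. That is where the $G$-averaging genuinely matters, and I would handle it via the kernel-smoothing argument sketched there.
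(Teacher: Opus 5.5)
Your structure is the paper's. You split along the $G$-invariant projection $p$, show that the off-diagonal blocks and the block $(\oid-p)\maD(\oid-p)$ over $\maP$ are compact, and then apply Proposition~\ref{prop.gen.Simonenko} on $M\smallsetminus\maP$. Your commutator check of pseudolocality is equivalent, via Kasparov's lemma, to the paper's check that $M_\phi\maD M_\psi$ is compact for $G$-invariant $\phi,\psi$ with disjoint supports. Your measure-zero and dominated-convergence argument supplies the strong convergence to $0$, which the paper only asserts.

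The real difference is in the step you flag as the main obstacle, and there your argument is still only a sketch.

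\textbf{How the paper does it.} It avoids microlocal analysis entirely. It approximates $g\mapsto D_g$ uniformly by finite sums $\sum_k\varphi_k(g)B_k$. On each clopen orbit $C_i\cong G/G_{x_i}$ the operator then becomes a norm limit of sums $B_kT_{\varphi_k}$, where $T_\varphi=\int_G\varphi(g)T_g\,dg$. Under $L^2(C_i,E\vert_{C_i})\cong L^2(G,E_i)^{G_{x_i}}$, $T_\varphi$ is convolution with the smooth kernel $\varphi(gh^{-1})$, hence compact. This uses only norm continuity of $g\mapsto D_g$, not that $D_g$ is pseudodifferential.

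\textbf{Your route and what is missing.} Your route is microlocal, in the spirit of the paper's Remark after the proof (uniformization plus $S^*_G\maP=\emptyset$). The phrase ``symbols vanishing outside $T^*_GM$'' is not an accurate description of $\Psi_G$. What your kernel computation actually yields is a wave front statement:
\begin{itemize}
\item On $G\times M\times M$, the kernel $(g,x,y)\mapsto k_{D_g}(x,gy)\circ g$ is conormal to $\{x=gy\}$.
\item A conormal covector $(\gamma,\xi,\eta)$ there satisfies $\eta=-(dg)^*\xi$, and $\gamma=0$ exactly when $\xi\in(T^*_GM)_x$.
\item Integrating over the compact fibre $G$ retains only covectors with $\gamma=0$.
\item Over $\maP$, where $(T^*_GM)_x=\{0\}$, the kernel of $(\oid-p)\maD(\oid-p)$ is therefore smooth, so the operator is compact. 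This also covers the cross terms between distinct orbits in $\maP$.
\end{itemize}
Once written out, this is a valid proof. It gives the more conceptual fact that kernels of $G$-operators have wave front set lying over $T^*_GM$. The cost is pullback and pushforward wave front calculus, which the paper's convolution argument avoids.
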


\begin{proof}
%

Recall that $p=M_{\chi_{M\smallsetminus \maP}}$, $\oid-p = M_{1-\chi_{M\smallsetminus \maP}}$ and 
let us write 
\begin{equation*}
\maD= p \maD p + (\oid-p) \maD (\oid-p) +p \maD (\oid-p) + (\oid - p) \maD p.
\end{equation*}
The operators $(\oid-p) \maD (\oid-p)$, $p \maD (\oid-p)$ and $(\oid - p) \maD p$ are compact.
Indeed, let 
$\phi,\psi \in \maC(M)^G$ be $G$-invariant functions with disjoint supports. 
Then  $T_g M_\psi=M_\psi T_g$ for all $g\in G$ and therefore
\begin{equation*}
M_\phi \maD M_\psi 
= \int_G M_\phi D_g  M_\psi  T_g \ dg \in \maK(L^2(M,E)), 
\end{equation*}
because $M_{f_1} P M_{f_2}$ is compact whenever $ P \in \Psi(M,E)$ and $f_1,f_2 \in \maC(M)$ have disjoint supports.
This implies that $p \maD (\oid-p)$, and $ (\oid-p) \maD p$ are compact operators.

To show that $(\oid-p)\maD (\oid-p) $ is a compact operator, it is enough to show that 
$(\oid-p) \maD (\oid-p)\in \maK(L^2(\maP,E\vert_\maP))$. 
Recall that $\maP=\sqcup C_i$ is a finite union of clopen orbits $C_i \cong G/G_{x_i}$ with suitable $x_i\in M$ 
and that $E\vert_{C_i} \cong G \times_{G_{x_i}} E_{i}$,
where $E_{i}$ is a unitary representation of $G_{x_i}$.    
Let $\phi_i$ be the characteristic function of $C_i$. Then we can write
\begin{equation*}
M_{1-\chi} \maD M_{1-\chi} = \sum_i M_{\phi_i} \maD M_{\phi_i} + \sum_{i\neq j} M_{\phi_i} \maD M_{\phi_j}.
\end{equation*} 
From the previous discussion we know that $M_{\phi_i} \maD M_{\phi_j}$ is compact if $i \neq j$ 
and therefore $\sum_{i\neq j} M_{\phi_i} \maD M_{\phi_j}$ is compact.
Let us show that also $M_{\phi_i} \maD M_{\phi_i} $ is compact.
Notice that $M_{\phi_i} \maD M_{\phi_i} = \int_G M_{\phi_i} D_g M_{\phi_i} T_g \ dg$ is a $G$-pseudodifferential operator
on $C_i \cong G/G_{x_i}$. By definition, $M_{\phi_i} \maD M_{\phi_i}$ is the limit of sums of operators of the form 
$B \circ T_\varphi$, where $B$  is a bounded operator on $L^2(C_i,E\vert_{C_i})$, 
$T_\varphi:=\int_G \varphi(g) T_g \ dg$ and $\varphi \in C^\infty(G)$.
Since 
$E\vert_{C_i} \cong G \times_{G_{x_i}} E_{i}$ 
and $\varphi \in C^\infty(G)$  the operator 
$ \hat{T}_\varphi:=\int_G \varphi(g) T_g \ dg : L^2(G,E_{i}) \rightarrow L^2(G,E_{i})$ is compact, 
because its Schwartz kernel is given by the smooth function $k_\varphi(g,h):=\varphi(gh^{-1})$. 
It follows that the restriction $\hat{T}_\varphi^{G_{i}}$ of $\hat{T}_\varphi$ to 
$L^2(G,E_{i})^{G_{x_i}}\cong L^2(G/G_{x_i},G \times_{G_{i}} E_{i})\cong L^2(C_i,E\vert_{C_i})$ is compact.
But modulo the isomorphism $L^2(G,E_{i})^{G_{x_i}} \cong L^2(C_i,E\vert_{C_i})$ the operator $\hat{T}_\varphi^{G_{i}}$ is exactly $T_\varphi$,
and this shows that $B \circ T_\varphi$ is compact.
Since the ideal of compact operators is closed, we see that $M_{\phi_i} \maD M_{\phi_i}$ is compact.

Therefore, it follows that $\oid + \maD$ is Fredholm if, and only if, 
$\oid + p \maD p$ is Fredholm.
The operator $\oid + p \maD p$ is diagonal with respect to the orthogonal decomposition 
$L^2(M,E) =L^2(M\smallsetminus \maP , E\vert_{M\smallsetminus \maP}) \oplus L^2(\maP,E\vert_\maP)$.
Thus, $\oid + \maD$ is Fredholm if, and only if, 
$\oid_{L^2(M\smallsetminus \maP,E\vert_{M\smallsetminus \maP})} + p \maD p$ is Fredholm.

Since for any $\phi, \psi \in \maC(M\smallsetminus \maP)^G$ with disjoint support, 
\begin{equation*}
M_\phi\big(\oid_{L^2(M\smallsetminus \maP,E\vert_{M\smallsetminus \maP})} + p \maD p\big) M_\psi=M_\phi \maD M_\psi 
\end{equation*}
is a compact operator, we get that
\begin{equation*}
\oid_{L^2(M\smallsetminus \maP,E\vert_{M\smallsetminus \maP})} + p \maD p \in \Psi_{M\smallsetminus \maP/G}\big(L^2(M\smallsetminus \maP,E\vert_{M\smallsetminus \maP})\big).
\end{equation*}  
Consequently, the result follows from Proposition \ref{prop.gen.Simonenko}
because on $M\smallsetminus \maP$ the property of strong convergence to $0$ is satisfied. 
\end{proof}

\begin{remark}
In the previous proof we could have shown that 
$\oid_{L^2(\maP,E\vert_\maP)} + M_{1-\chi} \maD M_{1-\chi}$ is Fredholm using
the pseudodifferential uniformization principle from \cite{Sternin.unif}
and the fact that $S^*_GM\vert_{\maP}=S^*_G\maP=\emptyset$.
Indeed, in this case $\oid_{L^2(\maP,E\vert_\maP)} + M_{1-\chi} \maD M_{1-\chi}$
is elliptic in the sense of \cite[Definition 5.1]{Sternin.unif} and therefore Fredholm. 
\end{remark}

\begin{corollary}\label{cor.Simonenko.Gop}
Let $P \in \Psi(M,E)$ and $\maD \in \Psi_G(M,E)$.
Then $P + \maD$ is Fredholm if, and only if, $P$ is elliptic on $\maP$ and 
the restriction of $P+\maD$ to $M\smallsetminus \maP$ is locally invertible. 
\end{corollary}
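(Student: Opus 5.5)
The plan is to mimic the proof of Theorem~\ref{thm.Simonenko.Gop}, with the identity replaced by the pseudodifferential part $P$. We decompose with respect to the $G$-invariant projection $p=M_{\chi_{M\smallsetminus\maP}}$:
\begin{equation*}
P+\maD = p(P+\maD)p + (\oid-p)(P+\maD)(\oid-p) + p(P+\maD)(\oid-p) + (\oid-p)(P+\maD)p .
\end{equation*}
The off-diagonal terms are compact. For $\maD$ this is exactly the argument given in the proof of Theorem~\ref{thm.Simonenko.Gop}. For $P$ it holds because $\chi_{M\smallsetminus\maP}$ and $1-\chi_{M\smallsetminus\maP}$ are continuous functions with disjoint supports, since $\maP$ is clopen by Lemma~\ref{lemma.finite.component}, and $P\in\Psi(M,E)$ is pseudolocal. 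Moreover $(\oid-p)\maD(\oid-p)$ is compact, again by the proof of Theorem~\ref{thm.Simonenko.Gop}: on the clopen orbits $C_i$ the operators $T_\varphi$ are compact. Hence $P+\maD$ is Fredholm if and only if the block-diagonal operator $p(P+\maD)p\oplus(\oid-p)P(\oid-p)$ is Fredholm, that is, if and only if both blocks are Fredholm on $L^2(M\smallsetminus\maP,E)$ and on $L^2(\maP,E\vert_\maP)$ respectively.

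For the $\maP$-block, $(\oid-p)P(\oid-p)$ is the restriction of $P$ to the closed manifold $\maP$, a finite union of orbits that are also connected components. This restriction lies in the norm closure of order-zero classical pseudodifferential operators on $\maP$. By the standard fact that an element of $\Psi(\maP,E\vert_\maP)$ is Fredholm if and only if its principal symbol, extended by continuity to $S^*\maP$, is invertible, this block is Fredholm exactly when $P$ is elliptic on $\maP$. The only subtlety is to make sure this symbol criterion is valid for the $C^*$-closure. We use the exact sequence $0\to\maK\to\Psi\to\maC(S^*\maP,\End E)\to0$, which remains valid after closure.

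For the other block, $p(P+\maD)p$ belongs to $\Psi_{(M\smallsetminus\maP)/G}(L^2(M\smallsetminus\maP,E))$. Commutators with $M_f$ for $f\in\maC(M\smallsetminus\maP)^G$ are compact: for $P$ this follows from pseudolocality, and for $\maD$ from $T_gM_f=M_fT_g$ together with pseudolocality of the $D_g$. The representation of $\maC((M\smallsetminus\maP)/G)$ has the strong convergence to $0$ property. The reason is that $\maP$ has been removed, so no orbit has positive measure: an orbit not in $\maP$ has $(T^*_GM)_x\neq0$, hence positive codimension, hence measure zero, and dominated convergence gives $\|M_\chi h\|\to0$. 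The hypothesis that the image of $\maC((M\smallsetminus\maP)/G)$ avoids nonzero compacts holds for the same reason. Proposition~\ref{prop.gen.Simonenko} then gives that this block is Fredholm if and only if it is locally invertible.

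The main obstacle I expect is identifying ``the restriction of $P+\maD$ to $M\smallsetminus\maP$'' with $p(P+\maD)p$ acting on $L^2(M\smallsetminus\maP,E)$. This is routine given the clopen decomposition. The remaining care needed is in the verification of strong convergence to $0$, which the theorem's proof only asserts; I would spell out the measure-zero argument for non-clopen orbits.
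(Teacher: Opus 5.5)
Your proposal is correct and follows the paper's proof. The steps match: the four-block decomposition with respect to $p$, compactness of the off-diagonal blocks and of $(\oid-p)\maD(\oid-p)$, ellipticity as the Fredholm criterion on the $\maP$-block, and Proposition~\ref{prop.gen.Simonenko} on the $M\smallsetminus\maP$-block. Two of your additions justify points the paper only asserts: the symbol exact sequence for the norm closure $\Psi(\maP,E\vert_\maP)$, and the measure-zero argument for strong convergence to $0$.
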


\begin{proof}
For  $P \in \Psi(M,E)$ and $f_1,f_2 \in \maC(M)$ with disjoint supports, the operator $M_{f_1} P M_{f_2}$ is compact. 
Therefore, writing
\begin{equation*}
P + \maD = p(P + \maD)p + (\oid-p)(P + \maD)(\oid-p)+ (\oid-p)(P + \maD)p + p(P + \maD)(\oid-p),
\end{equation*}
we see that $(\oid-p)(P + \maD)p$ and $p(P + \maD)(\oid-p)$ are compact.
Moreover, $(\oid-p)(P + \maD)(\oid-p)=(\oid-p)P(\oid-p) + (\oid-p)\maD(\oid-p)$ and
in the last proof we showed that $(\oid-p)\maD(\oid-p)$ is compact.
Thus, $(\oid-p)(P + \maD)(\oid-p) \in \maL(L^2(\maP,E\vert_\maP))$ is Fredholm if, and only if, 
$(\oid-p)P (\oid-p) \in \Psi(\maP,E\vert_\maP)$ is Fredholm.
Since a classical pseudodifferential operator is Fredholm if, and only if, it is elliptic, 
it follows that $(\oid-p)(P + \maD)(\oid-p) \in \maL(L^2(\maP,E\vert_\maP))$ is Fredholm if, and only if, 
$(\oid-p)P (\oid-p) \in \Psi(\maP,E\vert_\maP)$ is elliptic.

Now, $p(P + \maD)p \in \Psi_{(M\smallsetminus \maP)/G}(L^2(M\smallsetminus \maP,E))$ and 
on $M\smallsetminus \maP$ the property of strong convergence to $0$ is satisfied. 
Thus, we can apply Proposition \ref{prop.gen.Simonenko} 
to the operator $p(P + \maD)p$ and we obtain that 
$p(P + \maD)p : L^2(M\smallsetminus \maP ,E) \rightarrow L^2(M\smallsetminus \maP ,E)$ 
is Fredholm if, and only if, it is locally invertible on $(M \smallsetminus \maP) /G$.
Using again that $p(P + \maD)p + (\oid - p) (P + \maD)(\oid-p)$ 
is diagonal with respect to the orthogonal decomposition 
$L^2(M,E) =L^2(M\smallsetminus \maP , E\vert_{M\smallsetminus \maP}) \oplus L^2(\maP,E\vert_\maP)$,
we obtain  the result.
\end{proof}

\section{The Fredholm property for finite groups}\label{sec.finite}
In this section, we shall apply the main result of \cite{BCLN2}, see also \cite{BCLN1,BCN}, 
to characterize Fredholm $G$-operators when $G$ is a finite group.
To avoid any confusion, we shall denote our group by $\Gamma$ in this section.

\subsection*{Pseudodifferential uniformization for finite groups}

Let   $\Gamma$ be a finite group and let $M$ be a compact manifold without boundary.
Let $E \rightarrow M$ be a Hermitian $\Gamma$-vector bundle as before.
Assume that $\Gamma $ acts on $M$ and $E$ by isometries.
%

Denote by $\C[\Gamma]$ the finite dimensional complex vector space of functions over $\Gamma$, 
that is the set of finite sums of the form $\sum_\Gamma a_\gamma \delta_\gamma$,
where $a_\gamma \in \C$ and $\delta_\gamma(\gamma')=1$ if $\gamma=\gamma'$ and $0$ otherwise. 
Notice that $L^2(M \times \Gamma,E)=L^2(M,E\otimes \C[\Gamma])$.

We consider two representations of $\Gamma$:
$$
 \begin{array}{ll}
 1 \otimes R_\gamma : L^2(M\times \Gamma ,E) \rightarrow L^2(M\times  \Gamma ,E), & (1\otimes R_\gamma )(s) (x,\gamma')=s(x,\gamma'\gamma),\vspace{2mm}\\
 T_\gamma \otimes L_\gamma : L^2(M\times \Gamma ,E) \rightarrow L^2(M\times  \Gamma ,E) ,& (T_\gamma\otimes L_\gamma) (s) (x,\gamma')=\gamma s(\gamma^{-1}x,\gamma^{-1}\gamma').
\end{array} 
$$
Clearly, these  representations commute: $( 1 \otimes R_\gamma )( T_{\gamma'} \otimes L_{\gamma'} ) = ( T_{\gamma'} \otimes L_{\gamma'} ) ( 1 \otimes R_\gamma )$ for all $\gamma,\gamma'\in\Gamma$.

We shall need the unitary isomorphism
\begin{equation}
Q : L^2 (M,E \otimes \C[\Gamma]) \rightarrow L^2(M,E\otimes \C[\Gamma]),\qquad Qs(x,\gamma)=\gamma s(\gamma^{-1}x,\gamma^{-1}).
\end{equation}
Notice that $Q^2=\oid$ and therefore $Q^*=Q^{-1}=Q$.

\begin{lemma} \label{lem.iso.commut}
The isomorphism $Q$ intertwines the representations $1 \otimes R$ and $T\otimes L$: $Q(1 \otimes R_\gamma)=(T_\gamma \otimes L_\gamma)Q$ forall $\gamma\in\Gamma$. Moreover, we have the following unitary isomorphism 
\begin{equation}
L^2(M,E) \cong L^2(M\times \Gamma,E)^{1\otimes R} \cong L^2(M \times \Gamma,E)^{T\otimes L}.
\end{equation}
\end{lemma}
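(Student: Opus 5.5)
The plan is a direct computation on sections $s \in L^2(M\times\Gamma,E)$, written as functions $s(x,\gamma')$ with $s(x,\gamma')\in E_x$. Three steps are needed: the intertwining relation, the invariants of $1\otimes R$, and transport of these invariants by $Q$.

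\emph{Step 1: intertwining.} For the left side, set $t=(1\otimes R_\gamma)s$, so $t(x,\gamma')=s(x,\gamma'\gamma)$. Applying $Q$ gives
\[
\big(Q(1\otimes R_\gamma)s\big)(x,\gamma') = \gamma'\, t(\gamma'^{-1}x,\gamma'^{-1}) = \gamma'\, s(\gamma'^{-1}x,\gamma'^{-1}\gamma).
\]
For the right side, apply the definition of $T_\gamma\otimes L_\gamma$ to $Qs$:
\[
\big((T_\gamma\otimes L_\gamma)Qs\big)(x,\gamma') = \gamma\,(Qs)(\gamma^{-1}x,\gamma^{-1}\gamma') = \gamma\,(\gamma^{-1}\gamma')\, s\big((\gamma^{-1}\gamma')^{-1}\gamma^{-1}x,\ (\gamma^{-1}\gamma')^{-1}\big).
\]
Simplifying, $\gamma\gamma^{-1}\gamma'=\gamma'$, $(\gamma^{-1}\gamma')^{-1}\gamma^{-1}=\gamma'^{-1}$ and $(\gamma^{-1}\gamma')^{-1}=\gamma'^{-1}\gamma$. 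The right side is therefore $\gamma' s(\gamma'^{-1}x,\gamma'^{-1}\gamma)$, which equals the left side. The only care needed is to keep track of the fibre action $E_{\gamma'^{-1}x}\to E_x$ and of the order of group multiplication. This bookkeeping is the only real obstacle, and it is routine.

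\emph{Step 2: invariants of $1\otimes R$.} A section $s$ is $1\otimes R$-invariant exactly when $s(x,\gamma'\gamma)=s(x,\gamma')$ for all $\gamma,\gamma'$. Equivalently, $s(x,\gamma')=u(x)$ for a single $u\in L^2(M,E)$. The map $u\mapsto u\otimes\mathbf 1$, where $\mathbf 1=\sum_\gamma\delta_\gamma$, is therefore a bijection from $L^2(M,E)$ onto $L^2(M\times\Gamma,E)^{1\otimes R}$. It multiplies norms by $|\Gamma|^{1/2}$, so the normalized map $u\mapsto|\Gamma|^{-1/2}\,u\otimes\mathbf 1$ is a unitary isomorphism.

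\emph{Step 3: transport by $Q$.} By Step 1, $Q$ maps the invariants of $1\otimes R$ onto the invariants of $T\otimes L$. Indeed, if $(1\otimes R_\gamma)s=s$ for all $\gamma$, then $(T_\gamma\otimes L_\gamma)Qs=Q(1\otimes R_\gamma)s=Qs$. Conversely, since $Q^2=\oid$, the identity of Step 1 can be rewritten as $(1\otimes R_\gamma)Q=Q(T_\gamma\otimes L_\gamma)$, so $Q$ also maps $T\otimes L$-invariants back into $1\otimes R$-invariants. Because $Q$ is unitary, its restriction is a unitary isomorphism between the two invariant subspaces.

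Composing with Step 2 gives the explicit unitary
\[
u\mapsto |\Gamma|^{-1/2}\, Q(u\otimes\mathbf 1), \qquad Q(u\otimes\mathbf 1)(x,\gamma)=\gamma\, u(\gamma^{-1}x) = (T_\gamma u)(x),
\]
so $L^2(M,E)\cong L^2(M\times\Gamma,E)^{T\otimes L}$ via $u\mapsto|\Gamma|^{-1/2}\sum_\gamma T_\gamma u\otimes\delta_\gamma$. As a sanity check, this section is directly seen to be $T\otimes L$-invariant.
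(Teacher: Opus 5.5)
Your proof is correct and follows the paper's argument: the same pointwise computation for the intertwining relation, the map $u\mapsto u\otimes 1$ onto the $1\otimes R$-invariants, and transport of these invariants by the involution $Q$. The differences are cosmetic. You use counting measure on $\Gamma$ and normalize by $|\Gamma|^{-1/2}$, where the paper uses the normalized Haar measure. You also spell out, via $Q^2=\oid$, that $Q$ maps each invariant subspace onto the other, which the paper leaves implicit.
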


\begin{proof}
We have 
\begin{align*}
Q(1\otimes R_\gamma)s(x,\gamma')&=\gamma'(1 \otimes R_\gamma)s(\gamma'^{-1}x,\gamma'^{-1})\\
&=\gamma's(\gamma'^{-1}x,\gamma'^{-1}\gamma), 
\end{align*}
and 
\begin{align*}
(T_\gamma \otimes L_\gamma)Qs(x,\gamma')&=\gamma Qs(\gamma^{-1}x,\gamma^{-1}\gamma')\\
&=\gamma\gamma^{-1}\gamma's(\gamma'^{-1}\gamma \gamma^{-1}x,\gamma'^{-1}\gamma),\\
&=\gamma's(\gamma'^{-1}x,\gamma'^{-1}\gamma). 
\end{align*}
The first isomorphism is given by $s \mapsto s \otimes 1_\Gamma$, i.e. $(s\otimes 1_\Gamma)(x,\gamma) = s(x)$. 
Indeed, 
$$
 (1 \otimes R_\gamma)(s \otimes 1_\Gamma)(x, \gamma')=(s \otimes 1_\Gamma)(x,\gamma'\gamma)=s(x)=(s\otimes 1_\Gamma)(x,\gamma').
 $$
Now, if $s\in L^2(M\times \Gamma , E)^{1\otimes R}$ then $s(x,\gamma)=(1 \otimes R_\gamma s)(x,e)=s(x,e)$.
These two morphisms are clearly isometric (with respect to the normalized Haar measure) and inverse to each other.

The second isomorphism is given by the unitary $Q$ since $Q(1\otimes R_\gamma)=(T_\gamma \otimes L_\gamma)Q$.
\end{proof}

The following theorem was shown in~\cite{Sternin.unif} for general compact Lie groups. Since here we are only interested in finite groups (see also \cite{Antonevich1973}), we give the proof in this simpler case for the benefit of the reader.

\begin{theorem} 
\label{thm.unif}
Let $\maD=\sum_{\gamma\in\Gamma} D_\gamma T_\gamma \in \Psi_\Gamma(M,E)$ and 
\begin{equation}
\hat{\maD}:=\sum_{\gamma\in\Gamma} Q\big(D_\gamma \otimes \oid_{\C[\Gamma]}\big) Q (1 \otimes R_\gamma): L^2(M,E \otimes \C[\Gamma])\longrightarrow L^2(M,E \otimes \C[\Gamma]) .
\end{equation} 
\begin{enumerate}\renewcommand{\labelenumi}{\rm (\alph{enumi})}
\item Then $\hat{\maD}  \in \Psi(M,E \otimes \C[\Gamma])$ 
is a 
$\Gamma$-invariant classical pseudodifferential operator on $M$ with respect to the $(T\otimes L)$-action with 
coefficients in $E\otimes \C[\Gamma] \rightarrow M$, i.e. $\hat{\maD} \in \Psi(M,E \otimes \C[\Gamma])^{T\otimes L}$.
\item Moreover, the restriction of $\hat{\maD}$ to $L^2(M,E\otimes \C[\Gamma])^{T\otimes L}$ is isomorphic to $\maD$.
\end{enumerate}
\end{theorem}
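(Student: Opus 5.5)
The proof is a direct computation built on Lemma~\ref{lem.iso.commut}. For (a) we first identify the operator $Q(D_\gamma\otimes \oid_{\C[\Gamma]})Q$ explicitly. Write a section of $E\otimes\C[\Gamma]$ as a tuple $(s(\cdot,\gamma'))_{\gamma'\in\Gamma}$. From $Qs(x,\gamma')=\gamma' s(\gamma'^{-1}x,\gamma'^{-1})$ we get
\begin{equation*}
\big(Q(D_\gamma\otimes\oid)Qs\big)(x,\gamma') = \big(T_{\gamma'}D_\gamma T_{\gamma'}^{-1}\, s(\cdot,\gamma')\big)(x).
\end{equation*}
So $Q(D_\gamma\otimes\oid)Q$ is the diagonal operator $\operatorname{diag}_{\gamma'}(T_{\gamma'}D_\gamma T_{\gamma'^{-1}})$. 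Each entry is a classical pseudodifferential operator of order $0$, since conjugation by a diffeomorphism lifted to $E$ preserves $\psi^0$. The operator $1\otimes R_\gamma$ acts only on the fibre $\C[\Gamma]$ by a constant permutation matrix, so it is a bundle endomorphism, hence an order $0$ differential operator. Products and finite sums stay in $\psi^0(M,E\otimes\C[\Gamma])$, and passing to norm limits gives the statement for $\Psi$.

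For the invariance we check $(T_\delta\otimes L_\delta)\hat{\maD}=\hat{\maD}(T_\delta\otimes L_\delta)$ for each $\delta\in\Gamma$. First, $1\otimes R_\gamma$ commutes with $T_\delta\otimes L_\delta$, as noted before Lemma~\ref{lem.iso.commut}. Second, $D_\gamma\otimes\oid$ commutes with every $1\otimes R_\delta$. Conjugating by $Q$ and using $Q(1\otimes R_\delta)Q=T_\delta\otimes L_\delta$ (Lemma~\ref{lem.iso.commut} together with $Q^2=\oid$), we get that $Q(D_\gamma\otimes\oid)Q$ commutes with $T_\delta\otimes L_\delta$. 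The same holds for the sum $\hat{\maD}$.

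For (b) we use the unitary $U:L^2(M,E)\to L^2(M\times\Gamma,E)^{T\otimes L}$, $U s=Q(s\otimes 1_\Gamma)$, from Lemma~\ref{lem.iso.commut}. We compute $U^{-1}\hat{\maD}U$. Since $Q^2=\oid$, we have
\begin{equation*}
\hat{\maD}U s=\sum_\gamma Q(D_\gamma\otimes\oid)(1\otimes R_\gamma)(s\otimes 1_\Gamma)'
\end{equation*}
in a form where we must commute $Q(1\otimes R_\gamma)=(T_\gamma\otimes L_\gamma)Q$. More concretely, $(1\otimes R_\gamma)Q(s\otimes 1_\Gamma)(x,\gamma')=\gamma'\gamma\, s(\gamma^{-1}\gamma'^{-1}x)$. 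Evaluating the resulting function at $\gamma'=e$ gives $\gamma s(\gamma^{-1}x)=T_\gamma s(x)$. Applying $Q(D_\gamma\otimes\oid)Q$, which is diagonal, the $e$-component becomes $D_\gamma T_\gamma s$. Since invariant sections are determined by a single component (via $U^{-1}$, essentially evaluation of $Q$ at $e$), this yields $U^{-1}\hat{\maD}U=\sum_\gamma D_\gamma T_\gamma=\maD$.

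The main obstacle is careful bookkeeping of the component formulas and of which identification map to use, rather than any genuine difficulty. One must check that the $e$-component determines invariant sections under the chosen identification, or else verify the identity on all components $\gamma'$ directly. For the closure $\Psi_\Gamma$, both sides depend continuously on the $D_\gamma$ in norm, so the identity extends from $\psi^0$.
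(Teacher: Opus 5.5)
Your argument is correct and follows the paper's proof. In (a) you use the same diagonal formula $Q(D_\gamma\otimes\oid_{\C[\Gamma]})Q=\operatorname{diag}_{\gamma'}(T_{\gamma'}D_\gamma T_{\gamma'}^{-1})$ and the same intertwining relations, and in (b) the same identification $U=QV$. The only difference is that you check (b) componentwise, using invariance from (a) and $U^{-1}f=f(\cdot,e)$, whereas the paper uses the operator identity $Q\hat{\maD}Q=\sum_\gamma(D_\gamma\otimes\oid_{\C[\Gamma]})(T_\gamma\otimes L_\gamma)$ together with $(T_\gamma\otimes L_\gamma)V=VT_\gamma$.

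Your first displayed formula in (b) is wrong as written: it lets $1\otimes R_\gamma$ act on $s\otimes 1_\Gamma$, which would lose the shifts $T_\gamma$. The correct version is $Q(1\otimes R_\gamma)Q(s\otimes 1_\Gamma)=(T_\gamma\otimes L_\gamma)(s\otimes 1_\Gamma)=(T_\gamma s)\otimes 1_\Gamma$, and this gives $\hat{\maD}U=U\maD$ in one line.
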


\begin{proof}
(a)  First, $\hat{\maD}$ is clearly $\Gamma$-invariant with respect to the $(T \otimes L)$-action. 
Indeed, we have 
$( 1 \otimes R_\gamma )( T_{\gamma'} \otimes L_{\gamma'} ) = ( T_{\gamma'} \otimes L_{\gamma'} ) ( 1 \otimes R_\gamma )$. That 
$Q( T_{\gamma'} \otimes L_{\gamma'} ) = (1 \otimes R_{\gamma'}) Q$ follows from  Lemma \ref{lem.iso.commut}  and $Q^2=\oid$. Finally, 
$(D_\gamma \otimes \oid_{\C[\Gamma]})(1 \otimes R_{\gamma'}) = (1 \otimes R_{\gamma'})(D_\gamma \otimes \oid_{\C[\Gamma]})$.
This implies that $\hat{\maD}(T_{\gamma'}\otimes L_{\gamma'}) =(T_{\gamma'}\otimes L_{\gamma'})\hat{\maD}$. 

Moreover,  the fact that				
\begin{align*}
Q(s \otimes \delta_{\gamma'})(x,\gamma) & = T_\gamma(s)(x) \delta_{\gamma'}(\gamma^{-1})= T_{\gamma'^{-1}}(s)(x)\delta_{\gamma'^{-1}}(\gamma)
\end{align*}
implies that 
\begin{equation}\label{QDIdQ}
Q\big(D_\gamma \otimes \oid_{\C[\Gamma]}\big) Q (s \otimes \delta_{\gamma'})
=(T_{\gamma'} D_\gamma T_{\gamma'}^{-1} s) \otimes \delta_{\gamma'}.
\end{equation}
Therefore, $\hat{\maD}$ is a $(T\otimes L)$-invariant element of $\Psi(M,E \otimes \C[\Gamma]))$.  

(b) Since the unitary $Q$ intertwines the representations $T\otimes L$ and $1 \otimes R$, we have that
$Q\hat{\maD}Q$ is the $(1 \otimes R)$-invariant operator
\begin{equation*}
Q\hat{\maD}Q =  \sum_\Gamma (D_\gamma \otimes \oid_{\C[\Gamma]})(T_\gamma \otimes L_\gamma).
\end{equation*} 
Recall that the first isomorphism of Lemma \ref{lem.iso.commut} is given for 
$s \in L^2(M,E)$ and $f\in L^2(M, E \otimes \C[\Gamma])^{T \otimes L}$ by 
\begin{equation*}
Vs= s \otimes 1_\Gamma \qquad  \text{and} \qquad V^{-1}(f)(x)=f(x,e).
\end{equation*}
Since $(T_\gamma \otimes L_\gamma) V(s)=V(T_\gamma s)$, 
it follows that $V$ intertwines the restrictions of $Q \hat{\maD}Q$ and 
$\maD = \sum_\Gamma D_\gamma T_\gamma$. This completes the proof.
\end{proof}

\subsection*{Characterization of Fredholm $\Gamma$-operators}

Let as before $\Gamma$ be a finite group acting by isometries on a compact manifold without boundary $M$  and on a Hermitian $\Gamma$-vector bundle $F \rightarrow M$.
Let $\Gamma_x:=\{\gamma \in \Gamma , \gamma x=x\}$ be the stabiliser of $x$ in $\Gamma$.

\subsubsection*{The case when $M/\Gamma$ is connected} 

We assume that $M/\Gamma$ is connected. 
Then the principal orbit theorem, see \cite{Bredon,tomDieckTransBook}, implies that there is a subgroup  $\Gamma_0 \subset \Gamma$ such that 
\begin{itemize}
\item the set $M_{(\Gamma_0)}:=\{x \in M,\ \Gamma_x \ \mbox{is conjugated with } \Gamma_0\}$ is an open dense submanifold of $M$;
\item $\forall x\in M$, $\Gamma_x$ contains a subgroup conjugated with $\Gamma_0$.
\end{itemize}

Such a subgroup $\Gamma_0$ is called a \emph{minimal isotropy subgroup} 
and $M_{(\Gamma_0)}$ is called the \emph{principal orbit bundle}.

The following result was obtained in~\cite[Theorem~1.5 and Proposition~5.9]{BCLN2}, see also \cite{BCLN1,BCN}.

\begin{theorem}
\label{thm.BCLN}
Let 
$P \in \psi^m(M;F)^\Gamma$, for some $m
\in \R$.  The following are equivalent:
\begin{enumerate}\renewcommand\labelenumi{\rm (\roman{enumi})}
\item The principal symbol $\sigma_m(P)$ defines by restriction to $\Gamma_0$-invariant vectors an
  isomorphism
      \begin{equation*}\label{mainsmbl1}
        \sigma_m(P)(\xi):F_\xi^{\Gamma_0}\longrightarrow F_\xi^{\Gamma_0} \qquad \text{for all }\xi\in (T^*M^{\Gamma_0} \smallsetminus \{ 0\}) 
      \end{equation*}
 where $\Gamma_0$ is the minimal isotropy subgroup introduced above.     
\item The restriction   $P^\Gamma : H^{s}(M,F)^\Gamma \longrightarrow H^{s-m}(M,F)^\Gamma$
 of $P$ to the subspaces of $\Gamma$-invariant sections is Fredholm.      
\end{enumerate}  
\end{theorem}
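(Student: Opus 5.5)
The statement is Theorem~\ref{thm.BCLN}, quoted from \cite{BCLN2}; we outline how one would prove it.

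First we reduce to order zero and to $L^2$. Choose a $\Gamma$-invariant metric and connection, so that $\Delta$ commutes with the action. By the order-reduction lemma of Section~\ref{sec.preliminaries}, $P^\Gamma$ is Fredholm on $H^s\to H^{s-m}$ if and only if $\tilde P^\Gamma$ is Fredholm on $L^2(M,F)^\Gamma$. Here $\tilde P=(\oid+\Delta)^{(s-m)/2}P(\oid+\Delta)^{-s/2}\in\psi^0(M;F)^\Gamma$, and its principal symbol is a positive scalar multiple of $\sigma_m(P)$. So condition (i) is unchanged, and we may assume $m=0$ and $s=0$.

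Next, note that invariant sections are the range of the projection $p=\frac1{|\Gamma|}\sum_\gamma T_\gamma$, which commutes with $P$. Hence $P^\Gamma$ is Fredholm if and only if $pP+(1-p)$ is Fredholm on $L^2(M,F)$. This operator lies in the algebra generated by $\Psi(M,F)^\Gamma$ and the $T_\gamma$.

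We then localize with Proposition~\ref{prop.gen.Simonenko}, using $\maC(M/\Gamma)=\maC(M)^\Gamma$ acting on $L^2(M,F)^\Gamma$. Strong convergence to $0$ holds because orbits have measure zero. Fredholmness is then equivalent to local invertibility at every orbit $\Gamma x$. Near $\Gamma x$, the slice theorem identifies invariant sections over a tube $\Gamma\times_{\Gamma_x}U_x$ with $\Gamma_x$-invariant sections over a small $\Gamma_x$-invariant ball $U_x\subset T_xM$. So we must show: $P$ restricted to $\Gamma_x$-invariant sections is locally invertible near $x$ if and only if $\sigma_0(P)(\xi)$ is invertible on $F_\xi^{\Gamma_0}$ for $\xi\in S^*M^{\Gamma_0}$ over points near $x$. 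Here we must use every conjugate of $\Gamma_0$ contained in $\Gamma_x$, since $M^{\Gamma_0}$ really means the union over conjugates, which is $\Gamma$-invariant.

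For sufficiency, freeze coefficients at $x$. The operator becomes a $\Gamma_x$-invariant zero-order operator on $\R^n$ with a $\Gamma_x$-linear action. Its Fredholm property on invariant sections is governed by the invariant symbol algebra, namely the fixed-point algebra $\maC(S^*U_x,\End F)^{\Gamma_x}$. By Mackey or Frobenius theory this algebra is Morita equivalent to a crossed product. Its irreducible representations at a covector $\xi$ are parametrized by irreducibles of the isotropy group $\Gamma_\xi$ occurring in $F_\xi$. The representation relevant to invariant sections is the one on $F_\xi^{\Gamma_\xi}$.

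The key point is a density argument. Covectors with minimal isotropy $\Gamma_0$ (up to conjugacy) are dense in $S^*M^{\Gamma_0}$. For larger isotropy $\Gamma_\xi\supset\Gamma_0$ we have $F_\xi^{\Gamma_\xi}\subset F_\xi^{\Gamma_0}$, and $\sigma$ commutes with $\Gamma_\xi$. Therefore invertibility on all of $F_\xi^{\Gamma_0}$, which is a closed condition passing to limits by continuity, controls the smaller spaces. Conversely, only the covectors fixed by $\Gamma_0$ contribute to the restriction of the Calkin image to invariant sections.

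For necessity, suppose $\sigma(\xi_0)$ fails to be invertible on $F_{\xi_0}^{\Gamma_0}$. Then we build a singular Weyl sequence of $\Gamma$-invariant sections, concentrating at $\xi_0$ and oscillating in directions fixed by $\Gamma_0$. Averaging over $\Gamma$ preserves this, because distinct translates $\gamma\xi_0$ separate in phase space. This contradicts the Fredholm property.

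The main obstacle is the exact identification of the spectrum of the invariant symbol algebra, i.e. the primitive ideals of $\maC(S^*M,\End F)^\Gamma$ in $p\Psi p$ modulo compacts. We must show that precisely the representations on $F_\xi^{\Gamma_0}$ with $\xi\in S^*M^{\Gamma_0}$ are needed, and that they are jointly faithful. We would attack this first via the slice-wise crossed product picture $\maC_0(T^*U)\rtimes\Gamma_x$ and its induced representations.
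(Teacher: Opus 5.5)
The paper does not prove Theorem~\ref{thm.BCLN}; it imports it from \cite{BCLN2}, so your outline has to stand on its own. Your preliminary reductions are fine: order reduction, the projection $p$, and Simonenko localisation. The implication (i)$\Rightarrow$(ii), however, is not proved. Deciding which representations of the invariant symbol algebra survive on $L^2(M,F)^\Gamma$, and showing they are jointly faithful, is exactly what you defer as ``the main obstacle'', and it is the whole content of sufficiency. The route you sketch for it also rests on a false claim: that the representation relevant for invariant sections at $\xi$ is the one on $F^{\Gamma_\xi}_\xi$. Here is a counterexample. Let $\Gamma=\Z/2$ act on $M=S^1\times S^1$ by $(\theta,\phi)\mapsto(-\theta,\phi)$, and let the generator act by $-1$ on the trivial line bundle $F$. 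Then $\Gamma_0=\{e\}$ and the invariant sections are the functions odd in $\theta$. The covector $\xi_0=((0,\phi_0),d\phi)$ has $\Gamma_{\xi_0}=\Gamma$, so $F^{\Gamma_{\xi_0}}_{\xi_0}=0$. Hence an invariant zero-order symbol vanishing only at $\xi_0$ is invertible on $F^{\Gamma_\xi}_\xi$ for every $\xi$. Yet $P^\Gamma$ is not Fredholm: with $a$ odd, $u_n=n^{1/2}a(n^{1/2}\theta)\,b(n^{1/2}(\phi-\phi_0))\,e^{in\phi}$ is an invariant singular Weyl sequence. At a singular covector the relevant representations are all $\Gamma_\xi$-isotypic pieces $\rho$ with $\rho^{\Gamma_0}\neq0$, which together make up $F^{\Gamma_0}_\xi$, and they arise as limits of principal ones. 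Note also that invertibility is an open condition, not a closed one. What passes to limits is a uniform bound $|\sigma(\xi)v|\ge c|v|$.

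Your necessity argument fails at the same covectors. If $\Gamma_{\xi_0}\supsetneq\Gamma_0$, the translates $\gamma\xi_0$ with $\gamma\in\Gamma_{\xi_0}$ coincide rather than separate. Averaging a wave packet with polarisation $v\in\ker\sigma(\xi_0)\cap F^{\Gamma_0}_{\xi_0}$ then projects $v$ onto $F^{\Gamma_{\xi_0}}_{\xi_0}$, possibly to $0$; in the example, an even profile averages to $0$. The repair is to use wave packets only over principal points. There $\Gamma_0$ fixes a neighbourhood pointwise, and the translates by elements outside $\Gamma_0$ have disjoint supports. The Fredholm estimate then gives $|\sigma(\xi)v|\ge c|v|$ for $v\in F^{\Gamma_0}_\xi$, and you pass to limits.

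For sufficiency, an elementary idea for finite groups avoids the symbol-algebra analysis altogether. $\Gamma$ permutes the components of $M$ transitively. Let $M_1$ be a component containing a point with isotropy $\Gamma_0$, and let $\Gamma_1$ be its stabiliser. Then $\Gamma_1$ acts on $M_1$ with kernel exactly $\Gamma_0$, because a principal isotropy group fixes a neighbourhood pointwise, hence (acting by isometries) all of $M_1$. So $L^2(M,F)^\Gamma\cong L^2(M_1,F^{\Gamma_0}|_{M_1})^{\Gamma_1}$. Modulo the smoothing blocks of $P$ between distinct components, $P^\Gamma$ is the restriction of the compression of $P$ to the subbundle $F^{\Gamma_0}|_{M_1}$. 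That compression has principal symbol $\sigma_m(P)(\xi)|_{F^{\Gamma_0}_\xi}$. Condition (i) makes it elliptic, so a $\Gamma_1$-averaged parametrix inverts $P^\Gamma$ modulo compacts.
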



Notice that $\Psi(M,E) \subset \Psi_\Gamma(M,E)$ since $\Gamma$ is discrete.
Therefore, in particular, $\Psi_\Gamma(M,E)$ is unital, i.e. $\oid \in \Psi_\Gamma(M,E)$.
We can now give the main result of this section.

\begin{theorem}\label{thm.main}
Let $\maD \in \Psi_\Gamma(M,E)$ be a $\Gamma$-pseudodifferential operator. 
Then the following assertions are equivalent.
\begin{enumerate}\renewcommand{\labelenumi}{\rm(\roman{enumi})}
\item The operator $\maD:L^2(M,E)\to L^2(M,E)$ is Fredholm.
\item The restriction to $L^2(M,E \otimes \C[\Gamma])^{T\otimes L}$ 
of the operator $\hat{\maD}$ introduced in Theorem \ref{thm.unif} is Fredholm.
\item The restriction 
\begin{equation}\label{eq-smbl31}
 \sigma_0(\hat{\maD})(\xi) : \big(E_\xi \otimes \C[\Gamma]\big)^{\Gamma_0} \rightarrow \big(E_\xi \otimes \C[\Gamma]\big)^{\Gamma_0}  
\end{equation}
is invertible $\forall \xi \in S^*M^{\Gamma_0}$.
\end{enumerate} 
\end{theorem}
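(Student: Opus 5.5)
The plan is to prove the two equivalences (i)$\Leftrightarrow$(ii) and (ii)$\Leftrightarrow$(iii) separately, using Theorem~\ref{thm.unif} for the first and Theorem~\ref{thm.BCLN} for the second.

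\emph{(i)$\Leftrightarrow$(ii).} By Theorem~\ref{thm.unif}(b), the restriction of $\hat{\maD}$ to $L^2(M,E\otimes\C[\Gamma])^{T\otimes L}$ is unitarily equivalent to $\maD$. The equivalence is implemented by the unitary $Q\circ V$ built from Lemma~\ref{lem.iso.commut}. Fredholmness is invariant under unitary equivalence, so (i) and (ii) are equivalent at once.

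\emph{(ii)$\Leftrightarrow$(iii), classical case.} Apply Theorem~\ref{thm.BCLN} with $F=E\otimes\C[\Gamma]$, equipped with the $\Gamma$-action $T\otimes L$, and with $m=0$, $s=0$. This is legitimate once $\hat{\maD}$ is a classical $\Gamma$-invariant pseudodifferential operator of order $0$, which holds whenever the $D_\gamma$ belong to $\psi^0(M,E)$. Indeed, by \eqref{QDIdQ} the operator $\hat{\maD}$ is a matrix of classical operators of the form $T_{\gamma'}D_\gamma T_{\gamma'}^{-1}$ composed with the finite permutations $1\otimes R_\gamma$, and it is $(T\otimes L)$-invariant by Theorem~\ref{thm.unif}(a). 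Its principal symbol on the $\Gamma_0$-fixed covectors is then exactly the map \eqref{eq-smbl31}. Homogeneity of degree $0$ lets us replace $T^*M^{\Gamma_0}\smallsetminus\{0\}$ by $S^*M^{\Gamma_0}$. Theorem~\ref{thm.BCLN} then gives the equivalence directly.

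\emph{(ii)$\Leftrightarrow$(iii), closure case.} The main obstacle is that $\maD$ is only assumed to lie in the norm closure $\Psi_\Gamma(M,E)$, so $\hat{\maD}$ lies in the closure $\Psi(M,F)^{\Gamma}$, whereas Theorem~\ref{thm.BCLN} is stated for classical operators. I would handle this by a density and perturbation argument in three steps.
\begin{enumerate}
\item The principal symbol map $\psi^0(M,F)\to\maC(S^*M,\End F)$ extends continuously to the closure, with kernel the compact operators. Hence $\sigma_0(\hat{\maD})$ makes sense.
\item Approximate $\hat{\maD}$ in norm by classical $\Gamma$-invariant operators $\hat{\maD}_n$, obtained by averaging approximants of the $D_\gamma$. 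Both properties in question are open: Fredholmness of the invariant restriction is stable under small norm perturbations, and invertibility of the restricted symbol on the compact set $S^*M^{\Gamma_0}$ is stable under small sup-norm perturbations. The symbols converge because $\|\sigma_0(A)\|_\infty\le\|A\|$.
\item Openness gives only one direction in each case, so it is not quite enough. For the missing direction I would instead run the argument of \cite{BCLN2} directly on the closure. That argument characterizes Fredholmness of $P^\Gamma$ through the symbol algebra $\maC(S^*M,\End F)^\Gamma$ restricted to $\Gamma_0$-invariants, and it uses only $C^*$-algebraic properties that pass to norm limits.
\end{enumerate}

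Alternatively, in the closure case one can choose a parametrix-type correction. If (iii) holds for $\hat{\maD}$, it holds for $\hat{\maD}_n$ when $n$ is large, so $\hat{\maD}_n^\Gamma$ is Fredholm and hence so is $\hat{\maD}^\Gamma$. Conversely, if $\hat{\maD}^\Gamma$ is Fredholm, then $\hat{\maD}_n^\Gamma$ is Fredholm for large $n$, so (iii) holds for $\hat{\maD}_n$ uniformly in $n$. This requires a uniform bound on the inverses of the restricted symbols, which follows from the norm estimate on the Calkin-type quotient established in \cite{BCLN2}.
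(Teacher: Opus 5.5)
Your argument is the same as the paper's: (i)$\Leftrightarrow$(ii) is the unitary equivalence $QV$ of Theorem~\ref{thm.unif}(b), and (ii)$\Leftrightarrow$(iii) is Theorem~\ref{thm.BCLN} applied to $P=\hat{\maD}$ with $F=E\otimes\C[\Gamma]$ carrying the $T\otimes L$-action (the paper does not discuss the norm-closure issue at all). In your optional closure discussion, the step in the ``alternatively'' paragraph, ``$\hat{\maD}_n^\Gamma$ is Fredholm, hence so is $\hat{\maD}^\Gamma$'', uses openness in the wrong direction, since a norm limit of Fredholm operators need not be Fredholm, as your own step~3 observes; so both directions of the closure case need the map sending the restricted symbol of $P$ to the Calkin class of $P^\Gamma$ to be a well-defined isometric $*$-homomorphism, which comes from the $C^*$-algebraic proof behind Theorem~\ref{thm.BCLN} and not from openness.
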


\begin{proof}
The  equivalence $(\rm{i}) \Leftrightarrow (ii)$ is a consequence of Theorem~\ref{thm.unif},
while the equivalence $(\rm{ii}) \Leftrightarrow (iii) $ 
follows from Theorem~\ref{thm.BCLN} applied with $P=\hat{\maD}$ and  $F=E \otimes \C[\Gamma]$. 
\end{proof}

\subsubsection*{The case when $M/\Gamma$ is not connected} 

We now explain how to reduce the general case to the case, where  $M/\Gamma$ is connected.

Let $\pi : M \to M/\Gamma$ be the quotient map and consider the decomposition
$M/\Gamma = \sqcup_{i=1}^N C_i$ as the {\em disjoint} union
of its connected components. 
We then introduce the preimages $M_i := \pi^{-1}(C_i)$
 of these connected components. In
general, the submanifolds $M_i$ are not connected, but, for each $i$,
$M_i/\Gamma = C_i$ {\em is connected} and   $M_i$ is $\Gamma$-invariant. 


We shall denote by $\chi_i \in C^\infty(M)^\Gamma$ the characteristic function of $M_i$ and
by $E_i := E\vert_{M_i}$ the restriction of a $\Gamma$-equivariant vector bundle $E$ over $M$ to $M_i$. 
Notice that $p_i:=M_{\chi_i} : L^2(M,E) \rightarrow L^2(M,E)$ is a $\Gamma$-invariant projection
such that $p_ip_j=0$ for all $i\neq j$ and $\oid = \sum p_i$.
Therefore, we have 
\begin{equation}\label{eq.dir.sum}
  L^2(M; E) \, \simeq \, \oplus_{i=1}^N L^2(M_i; E_i) \quad \mbox{ and
  } \quad
  \oplus_{i=1}^N \Psi_\Gamma(M_i; E_i) \, \subset \, \Psi_\Gamma(M,E)\,.
\end{equation}

Recall that $\maK(\maH)$ denotes the algebra of compact operators on a
Hilbert space $\maH$. The following proposition provides the desired
reduction to the connected case.

\begin{proposition}\label{prop.reduction}
Let $p_i:= M_{\chi_i} : L^2(M; E) \to L^2(M_i; E_i)$ be the canonical orthogonal
projection as before. For $\maD \in \Psi_\Gamma(M,E)$, we have
\begin{equation*}
\maD = \sum_{i} \maD_i + K,\quad\text{where $\maD_i = \sum p_i D_\gamma p_i T_\gamma \in \Psi_\Gamma(M_i,E_i)$ }
\end{equation*}
and $K \in \maK(L^2(M,E))$ is compact.
Moreover, $\maD$ is Fredholm if, and only if, all $\maD_i$ are Fredholm.
In particular, $\maD$ is Fredholm if, and only if, for every $i$, the operator $\hat{\maD}_i$ satisfies the condition in Theorem \ref{thm.main}(iii).
\end{proposition}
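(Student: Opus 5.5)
Write $\maD=\sum_\gamma D_\gamma T_\gamma$ and insert $\oid=\sum_i p_i$ on both sides of each $D_\gamma$:
\begin{equation*}
\maD=\sum_\gamma\sum_{i,j} p_i D_\gamma p_j T_\gamma .
\end{equation*}
The plan is to show that the off-diagonal terms are compact and that the remaining block structure gives the Fredholm equivalences.

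\textbf{Off-diagonal terms.} Each $\chi_i$ is a $\Gamma$-invariant smooth function, since the $M_i$ are clopen. For $i\neq j$ the functions $\chi_i,\chi_j$ have disjoint supports, so $p_iD_\gamma p_j$ is compact: $D_\gamma$ is a (limit of) pseudodifferential operators of order $0$, and such operators are pseudolocal. This is the same fact already used in the proof of Theorem~\ref{thm.Simonenko.Gop}. Hence
\begin{equation*}
K:=\sum_\gamma\sum_{i\neq j}p_iD_\gamma p_jT_\gamma \in \maK(L^2(M,E)).
\end{equation*}
If $\maD$ is only in the closure $\Psi_\Gamma(M,E)$, I first do the decomposition for the approximants and then pass to the norm limit. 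The maps $\maD\mapsto p_i\maD p_i$ and $\maD\mapsto p_i\maD p_j$ are continuous, and the compact operators form a closed ideal, so compactness survives the limit. For the approximants, $p_i D_\gamma p_i$ is the restriction of a pseudodifferential operator to the clopen set $M_i$, so $\maD_i\in\Psi_\Gamma(M_i,E_i)$.

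\textbf{Block diagonal structure.} Since $\chi_i$ is $\Gamma$-invariant, $p_i$ commutes with every $T_\gamma$. Therefore
\begin{equation*}
\maD_i=\sum_\gamma p_iD_\gamma p_iT_\gamma=p_i\maD_i p_i,
\end{equation*}
so $\sum_i\maD_i$ is block diagonal with respect to the decomposition \eqref{eq.dir.sum}. By Atkinson's theorem, $\maD$ is Fredholm if and only if $\bigoplus_i \maD_i$ is Fredholm. A block diagonal operator on a finite orthogonal sum is Fredholm if and only if each block is Fredholm: kernels and cokernels split as direct sums, and one can take parametrices blockwise. Here each $\maD_i$ is regarded as an operator on $L^2(M_i,E_i)$.

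\textbf{Reduction to Theorem~\ref{thm.main}.} The final claim follows by applying Theorem~\ref{thm.main} to each $\maD_i$ on the compact $\Gamma$-manifold $M_i$. This is legitimate because $M_i$ is closed, $\Gamma$-invariant, and $M_i/\Gamma=C_i$ is connected. The relevant minimal isotropy group $\Gamma_0^{(i)}$ may depend on $i$. I do not expect a serious obstacle; the only delicate points are the limiting argument for elements of the closure $\Psi_\Gamma(M,E)$ and checking that restricting to the clopen $M_i$ preserves classical pseudodifferential operators.
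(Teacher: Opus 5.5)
Your proposal is correct and follows essentially the paper's proof. You use the same decomposition $\maD=\sum_{i,j}p_i\maD p_j$, the commutation $p_iT_\gamma=T_\gamma p_i$ coming from $\Gamma$-invariance of $\chi_i$, compactness of the off-diagonal terms, and the block-diagonal reduction via Atkinson's theorem. Two small differences: you justify that $p_iD_\gamma p_j$ is compact by pseudolocality where the paper cites the vanishing principal symbol (the same fact), and you handle the norm-closure limit explicitly, which the paper leaves implicit.
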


\begin{proof}
We write
\begin{equation*}
\maD = \sum_{i,j} p_i \maD p_j =\sum_i p_i \maD p_i + K ,
\end{equation*}
where $K=\sum_{i\neq j} p_i \maD p_j$. 
By $\Gamma$-invariance of $\chi_i$, 
we obtain $T_\gamma p_i =T_\gamma M_{\chi_i} = M_{\chi_i} T_\gamma = p_i T_\gamma$.
It follows that
\begin{equation*}
p_i \maD p_j = \sum_\Gamma p_iD_\gamma p_j T_\gamma.
\end{equation*} 
Since $p_i D_\gamma p_i$ is a classical 
pseudodifferential operator on $M_i$ with coefficients in $E_i$,
we get that $\maD_i=p_i\maD p_i \in \Psi_\Gamma(M_i,E_i)$.
Moreover, if $i \neq j$ then $p_i D_\gamma p_j$ has zero principal symbol and hence is compact.
It follows that $K=\sum_{i\neq j} p_i \maD p_j$ is a finite sum of compact operators
and therefore is compact. 

The rest follows from Equation \eqref{eq.dir.sum} 
since $\sum_i \maD_i$ is a diagonal operator with respect to the direct sum 
$L^2(M,E) \cong \bigoplus_i L^2(M_i,E_i)$. 
\end{proof}

\subsection*{Application}
We now give the following result regarding the index of a Fredholm $\Gamma$-operator, 
see also \cite[Theorem 5.3]{BCLN2} and \cite{Nistor.Higher.Index} for more details. 
We denote by $\Sigma$ the $C^*$-algebra of symbols $ {\sigma}_0(\hat{\maD})$ with $\maD \in \Psi_\Gamma(M,E)$ of order zero
and $ {\sigma}_0(\hat{\maD})$ as in Theorem~\ref{thm.main}. In other words,
$\Sigma$ is the range of the restriction homomorphism 
$$
\maC(S^*M,\End(E \otimes \C[\Gamma]))^\Gamma \longrightarrow \maC(S^*M^{\Gamma_0},\End((E\otimes \C[\Gamma])^{\Gamma_0})).
$$
 We shall denote by $\hat{\Psi}(M,E)$ the $C^*$-algebra 
of restrictions to $L^2(M,E\otimes \C[\Gamma])^{T\otimes L}$ of elements of $\Psi(M,E\otimes \C[\Gamma])^{T\otimes L}$, see Theorem \ref{thm.unif}. Notice that $\hat{\Psi}(M,E) \cong \Psi(M,E)$.

\begin{proposition}
Let $\maD \in \Psi_\Gamma(M,E)$ be Fredholm. Then the Fredholm index of $\maD$ is given by
$$\Ind(\maD)=\partial [\widehat{\sigma}_0(\hat{\maD})],$$
where 
$\partial : K_1(\Sigma) \rightarrow K_0(\maK(L^2(M,E)))\cong \Z$ 
is the boundary map associated with the exact sequence
\begin{equation}\label{eq.sequence}
0  \longrightarrow \maK(L^2(M,E)) \longrightarrow \hat{\Psi}_\Gamma(M,E) \longrightarrow \Sigma  \longrightarrow 0.
\end{equation}
\end{proposition}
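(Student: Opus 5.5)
Via Theorem~\ref{thm.unif}, I will identify $\maD$ with the compression of the $\Gamma$-invariant pseudodifferential operator $\hat{\maD}$ to invariant sections. The index formula then becomes the standard one for an extension of $C^*$-algebras, where the index is the connecting map applied to the symbol class.

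First, exactness of \eqref{eq.sequence}. Let $V$ be the unitary $L^2(M,E)\to L^2(M,E\otimes\C[\Gamma])^{T\otimes L}$ from the proof of Theorem~\ref{thm.unif}, composed with $Q$. Through $V$, the algebra $\hat{\Psi}_\Gamma(M,E)$ of restrictions of $\Psi(M,E\otimes\C[\Gamma])^{T\otimes L}$ acts on a Hilbert space isomorphic to $L^2(M,E)$, and it contains the image of $\Psi_\Gamma(M,E)$. Next I need the map $\hat{\Psi}_\Gamma(M,E)\to\Sigma$, $P\vert_{\rm inv}\mapsto \sigma_0(P)\vert_{S^*M^{\Gamma_0}}$ restricted to $\Gamma_0$-invariant vectors, to be well defined and to have kernel exactly the compacts. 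Both facts come from \cite{BCLN2} (Theorem~1.5 and Proposition~5.9 there, summarized in Theorem~\ref{thm.BCLN}). Well-definedness means that an invariant operator whose restriction vanishes has vanishing restricted symbol. The kernel statement means that the restricted symbol vanishes iff the restriction is compact. The compacts are contained in the algebra because invariant smoothing operators restrict onto all compacts of the invariant subspace. Surjectivity onto $\Sigma$ holds by definition of $\Sigma$ as the range of the restriction map, combined with surjectivity of the principal symbol onto $\maC(S^*M,\End(E\otimes\C[\Gamma]))^\Gamma$ after averaging. This identification of the quotient $\hat{\Psi}_\Gamma/\maK$ with $\Sigma$ is the main obstacle. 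If the cited results only give the Fredholm criterion, I would deduce the kernel statement from it via a spectral argument: $\hat P$ has compact restriction iff $\lambda-\hat P$ is Fredholm on invariants for every $\lambda\neq 0$, iff the restricted symbol has spectrum $\{0\}$, and then apply this to $\hat P^*\hat P$.

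Second, the index. $\maD$ is Fredholm, so by Theorem~\ref{thm.main} its image $a=\sigma_0(\hat{\maD})\vert$ is invertible in $\Sigma$ and defines a class $[a]\in K_1(\Sigma)$. The six-term sequence of \eqref{eq.sequence} gives $\partial[a]\in K_0(\maK)\cong\Z$. I then use the standard fact that for any extension $0\to\maK(\maH)\to B\to B/\maK\to 0$ with $B\subset\maL(\maH)$, and any lift $T\in B$ of an invertible element, $\partial[\pi(T)]=[\ker T]-[\ker T^*]=\Ind T$. The concrete construction takes a parametrix $S\in B$ and computes $\partial$ as the class of $1-ST$ minus the class of $1-TS$, which one reduces to the kernel and cokernel projections by choosing $S$ as a generalized inverse. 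Applied to $T=V^{-1}\hat{\maD}\vert_{\rm inv}V=\maD$, whose image in $\Sigma$ is $a$, this gives $\Ind(\maD)=\partial[\widehat{\sigma}_0(\hat{\maD})]$.

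Finally, I note that naturality of $\partial$ makes the result independent of the chosen lift and of the identification $K_0(\maK)\cong\Z$.
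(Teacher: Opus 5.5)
Your proposal is correct and follows essentially the paper's route. You transport $\maD$ to the restriction of $\hat{\maD}$ via Theorem~\ref{thm.unif}, get exactness of \eqref{eq.sequence} from the symbol calculus of \cite{BCLN2}, and then use that the boundary map of an extension by the compacts computes the Fredholm index. You give considerably more detail than the paper, which only cites the isomorphism, Proposition~\ref{prop.reduction} and \cite[Theorem 5.3]{BCLN2}: your spectral argument identifying $\hat\Psi_\Gamma(M,E)/\maK$ with $\Sigma$ and your generalized-inverse computation of $\partial$ are both absent there. The paper's appeal to Proposition~\ref{prop.reduction} only reduces to the case of connected $M/\Gamma$, which your argument tacitly assumes, as does the definition of $\Sigma$ through a single $\Gamma_0$.
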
  

\begin{proof}
The fact that the sequence in~\eqref{eq.sequence} is exact follows from the isomorphism 
$L^2(M,E\otimes \C[\Gamma])^{T\otimes L}\cong L^2(M,E)$ and Proposition \ref{prop.reduction}.
The proof is completed using that the index morphism is the boundary map in $K$-theory for the Calkin exact sequence, e.g., see~\cite[Theorem 5.3]{BCLN2} and~\cite{Nistor.Higher.Index}.
\end{proof}

\begin{remark}
We refer to \cite{Antonevich1973,NSSBook,Savin.unif} for index theorems under the assumption that $\sigma_0(\hat{\maD}) \in \maC(S^*M, \End(E\otimes \C[\Gamma]))^\Gamma$ is invertible.
\end{remark}

\subsection*{Particular cases}

In this section, we discuss particular cases when $E=M\times \C $ with the trivial action of $\Gamma$ on $\C$. We shall denote simply $\Psi_\Gamma(M,E)=\Psi_\Gamma(M,\C )$ by $\Psi_\Gamma(M)$  and similarly $\Psi(M,E)$ by $\Psi(M)$.   Finally, we assume that $M/\Gamma$ is connected. 

\medskip

$\bullet$ If $\Gamma$ acts trivially on $M$ then $\Psi_\Gamma(M)=\Psi(M)$
and $\maD \in \Psi_\Gamma(M)$ is Fredholm if, and only if, $\maD$ is elliptic.
This is consistent with Theorem \ref{thm.main} because by Equation \eqref{QDIdQ} and  
Egorov's theorem we have  
\begin{align*}
\sigma_0\Big(\sum Q(D_\gamma \otimes \oid_{\C[\Gamma]})Q (1 \otimes R_\gamma)\Big)
&=\sigma_0\bigg(\sum_\gamma \underset{\gamma'}{\operatorname{diag}}\big(T_{\gamma'} D_\gamma T_{\gamma'^{-1}}\big)  (1 \otimes R_\gamma)\bigg)\\
&=\sum_\gamma \underset{\gamma'}{\operatorname{diag}}\big( \gamma'^{*-1}(\sigma_0(D_\gamma))\big) R_\gamma\\
&=\sum_\gamma \underset{\gamma'}{\operatorname{diag}}\big( \sigma_0(D_\gamma)\big) R_\gamma,
\end{align*}
where $\underset{\gamma'}{\operatorname{diag}}(A_{\gamma'})(v \otimes \delta_{\gamma''})=(A_{\gamma''} v) \otimes \delta_{\gamma''}$
and in the last equality we have used that the action is trivial on $M$. 
By triviality of the action on $M$, we have $\Gamma_0=\Gamma$ and therefore 
$(S^*M \times \C[\Gamma])^{\Gamma_0}=S^*M \times \C[\Gamma]^\Gamma=S^*M \times \C$.
Moreover, $(1 \otimes R_\gamma) \vert_{ (S^*M \times \C[\Gamma])^{\Gamma_0}}= \oid_{\C[\Gamma]^\Gamma}$ and thus
$$\sigma_0\Big(\sum Q(D_\gamma \otimes \oid_{\C[\Gamma]})Q (1 \otimes R_\gamma)\Big)\vert_{ (S^*M \times \C[\Gamma])^{\Gamma_0}}
= \underset{\gamma'}{\operatorname{diag}}\Big( \sum \sigma_0(D_\gamma)\Big)\vert_{(S^*M \times \C[\Gamma])^{\Gamma_0}} $$
is invertible if, and only if, $ \sum \sigma_0(D_\gamma)$ is invertible.
In other words, $\maD$ is Fredholm if, and only if, $\maD$ is elliptic. 

\medskip

$\bullet$ Recall that the action of a discrete  group is {\em topologically free}
if for all $N$ and $\gamma_1,...,\gamma_N\in \Gamma\setminus\{e\}$ the union   $\cup_{j=1}^N M^{\gamma_j}$ of fixed point sets does not contain an open set in $M$. If  $\Gamma$ is a finite group, then this condition is equivalent to the condition the, for any open set $U \subset M$, there is $x \in U$ such that all $\gamma x$ are distinct. Note that this condition is equivalent to the condition that $\dim M^\gamma<\dim M$ for all $\gamma\in\Gamma\setminus\{e\}$. It is also equivalent to the condition that $\Gamma_0=\{e\}$, where   $\Gamma_0\subset\Gamma$ is a minimal isotropy subgroup of the action. 
Assume that the action of $\Gamma$ is topologically free on $M$.
In this case, it was shown in \cite{AL94,
ABL98b}
that $\maD \in \Psi_\Gamma(M)$ is Fredholm if, and only if, its trajectory symbol is invertible.
In other words, $\maD \in \Psi_\Gamma(M)$ is Fredholm if, and only if, $\hat{\maD}$ is elliptic.
Again this is consistent with Theorem~\ref{thm.main}.


 $\bullet$ If a minimal isotropy subgroup $\Gamma_0\subset\Gamma$ of the action is a nontrivial normal subgroup, then the restriction of the action to $\Gamma_0$ is trivial. Hence, we obtain a topologically free action of the quotient group $\Gamma/\Gamma_0$ and we can write a $\Gamma$-operator as a $\Gamma/\Gamma_0$-operator:
 $$
 \mathcal{D}=\sum_{\gamma\in\Gamma} D_\gamma T_\gamma=\sum_{\langle\gamma\rangle\in \Gamma/\Gamma_0}\left(\sum_{\gamma'\in  \langle\gamma\rangle} D_{\gamma'}\right)T_\gamma.
 $$
 Then one can show that the symbol \eqref{eq-smbl31} for $\mathcal{D}$ as a $\Gamma$-operator is isomorphic to the symbol of this operator as a $\Gamma/\Gamma_0$-operator for the topologically free action of  $\Gamma/\Gamma_0$.
 
  $\bullet$ Let $H\subset \Gamma$ be a subgroup and consider $M=X\times (\Gamma/H)$, where $X$ is a connected closed smooth manifold. Let   $\Gamma$ act by left multiplications: $\gamma\in\Gamma$ takes  $(x,\gamma' H)  $ to $(x, \gamma \gamma' H)$. Then the minimal isotropy subgroup is equal to $\Gamma_0=H$ and we have an isomorphism 
$$
    J:C^\infty(X\times (\Gamma/H))\to C^\infty(X,\mathbb{C}[\Gamma]^H)
$$ 
of $\Gamma$-representations. This isomorphism takes a $\Gamma$-operator 
$$
D=\sum_{\gamma\in \Gamma} D_\gamma T_\gamma
$$
to the matrix pseudodifferential operator
$$
 JDJ^{-1}=\sum_\gamma D_\gamma  L^{-1}_\gamma,
$$
where $L_\gamma: \mathbb{C}[\Gamma]^{H}\to  \mathbb{C}[\Gamma]^{H}$ is the operator of left multiplication by $\gamma$: $(L_\gamma u)(\gamma')=u(\gamma \gamma')$.  Then a direct computation shows that the symbol~\eqref{eq-smbl31} of the $\Gamma$-operator $D$ is isomorphic to the symbol of the matrix pseudodifferential operator $JDJ^{-1}$.

\setlength{\baselineskip}{4.75mm}
\bibliographystyle{plain}


\end{document}